\newtheorem {thm}{Theorem}[section]
\newtheorem{lem}[thm]{Lemma}
\newtheorem{prop}[thm]{Proposition}
\newtheorem{cor}[thm]{Corollary}
\newtheorem{df}[thm]{Definition}
\newtheorem{ex}[thm]{Example}
\newtheorem{exs}[thm]{Examples}
\begin{document}

\title{Duo property for rings by the quasinilpotent perspective }

\author{Abdullah Harmanci}
\address{Abdullah Harmanci, Department of Mathematics, Hacettepe
University, Ankara,~ Turkey}\email{harmanci@hacettepe.edu.tr}

\author{Yosum Kurtulmaz}
\address{Yosum Kurtulmaz, Department of Mathematics, Bilkent University,
Ankara, Turkey}\email{yosum@fen.bilkent.edu.tr}

\author{Burcu Ungor}
\address{Burcu Ungor, Department of Mathematics, Ankara
University, Ankara, Turkey}\email{bungor@science.ankara.edu.tr}

\date{}

\begin{abstract} In this paper, we focus on the duo ring property
via quasinilpotent elements which gives a new kind of
generalizations of commutativity. We call this kind of ring {\it
qnil-duo}. Firstly, some properties of quasinilpotents in a ring
are provided. Then the set of quasinilpotents is applied to the
duo property of rings, in this perspective, we introduce and study
right (resp., left) qnil-duo rings. We show that this concept is
not left-right symmetric. Among others it is proved that if the
Hurwitz series ring $H(R; \alpha)$ is right qnil-duo, then $R$ is
right qnil-duo. Every right qnil-duo ring is abelian. A right
qnil-duo exchange ring has stable range 1.
 \vskip 0.5cm

\noindent {\bf 2010 MSC:}  16U80, 16U60, 16N40

\noindent {\bf Keywords:} Quasinilpotent element, duo ring,
qnil-duo ring
\end{abstract}
\maketitle
\section{Introduction}
Throughout this paper, all rings are associative with identity.
Let $N(R)$, $J(R)$, $U(R)$, $C(R)$ and Id$(R)$ denote the set of
all nilpotent elements, the Jacobson radical,  the set of all
invertible elements, the center and the set of  all idempotents of
a ring $R$, respectively. We denote the $n\times n$ full (resp.,
upper triangular) matrix ring over $R$ by $M_n(R)$ (resp.,
$U_n(R))$, and $D_n(R)$ stands for the subring of $U_n(R)$
consisting of  all matrices which have equal diagonal entries and
$V_n(R) = \{(a_{ij})\in D_n(R)\mid a_{ij} = a_{(i+1)(j+1)}$ for $i
= 1,\dots,n - 2$ and $j = 2,\dots,n-1\}$ is a subring of $D_n(R)$.
Let $\Bbb Z$ and $\Bbb Z_n$ denote the ring of integers and the
ring of integers modulo $n$ where $n\geq 2$.

In \cite{F}, Feller introduced the notion of duo rings, that is, a
ring is called {\it right} (resp., {\it left}) {\it duo} if every
right (resp., left) ideal is an ideal, in other words,
$Ra\subseteq aR$ (resp., $aR\subseteq Ra$) for every $a\in R$, and
a ring is said to be {\it duo} if it is both right and left duo. The duo
ring property was studied in different aspects. For example, in
\cite {HLP}, the concept of right unit-duo ring was introduced,
namely, a ring $R$ is called {\it right unit-duo} if for every
$a\in R$, $U(R)a\subseteq aU(R)$. Left unit-duo rings are defined
similarly. In \cite{KY}, the normal property of elements on
Jacobson and nil radicals were concerned. A ring $R$ is called
{\it right normal on Jacobson radical} if $J(R)a\subseteq aJ(R)$
for all $a\in R$. Left normal on Jacobson radical rings can be
defined analogously. Also in \cite{KY}, on the one hand, a ring
$R$ is said to satisfy the {\it right normal on upper nilradical}
if $N^*(R)a\subseteq aN^*(R)$ for all $a\in R$ where $N^*(R)$ is
the upper nilradical of $R$. Similarly, left normal on upper
nilradical rings are defined similarly. On the other hand, a ring
$R$ is said to satisfy the {\it right normal on lower nilradical}
if $N_*(R)a\subseteq aN_*(R)$ for all $a\in R$ where $N_*(R)$ is
the lower nilradical of $R$. Similarly, left normal on lower
nilradical rings are defined similarly. Also, a ring $R$ is called
{\it right nilpotent-duo} if $N(R)a \subseteq aN(R)$ for every
$a\in R$. Left nilpotent duo rings are defined similarly (see
\cite{HKKKL}).

Motivated by the works on duo property for rings, the goal of this
paper is to approach the notion of duo rings by the way of
quasinilpotent elements, in this regard, we introduce the notion of
qnil-duo rings. Firstly, we investigate some properties of
quasinilpotent elements which we need for the investigation of
qnil-duo property. Then we study some properties of this class of
rings and observe that being a qnil-duo ring need not be
left-right symmetric. It is proved that any right (resp., left)
qnil-duo ring is abelian, and any exchange right (resp., left)
qnil-duo ring has stable range 1. It is observed that regularity
and strongly regularity coincide for right (resp., left) qnil-duo
rings.  We also study on some extensions of rings such as Dorroh
extensions, Hurwitz series rings and some subrings of matrix rings
in terms of qnil-duo property.

\section{Some properties of quasinilpotents}
\noindent Let $R$ be a ring and $a\in R$. The {\it commutant} and
{\it double commutant} of $a$ in $R$ are defined by comm$(a) =
\{b\in R\mid ab = ba\}$ and comm$^2(a) = \{b\in R\mid bc = cb $
for all $c\in ~\mbox{comm}(a)\}$, respectively, and $R^{qnil} =
\{a\in R \mid 1 + ax$ is invertible in $R$ for every $x \in $
comm$(a)\}$. Elements of the set $R^{qnil}$ are called {\it
quasinilpotent} (see \cite{Ha}). Note that $J(R) = \{a\in R\mid 1
+ ax$ is invertible for $x\in R\}$. If $a\in N(R)$ and $x\in $
comm$(a)$, then $ax\in N(R)$ and $1 + ax\in U(R)$. So
$J(R)\subseteq R^{qnil}$, $N(R)\subseteq R^{qnil}$ and $R^{qnil}$
does not contain invertible elements, $0\in R^{qnil}$ but the
identity is not in $R^{qnil}$. In this section, we start to expose some properties of $R^{nil}$ and continue to study some other properties
of quasinilpotent elements in rings.
\begin{ex} {\rm There are rings $R$ such that $J(R)$ is strictly contained in $R^{qnil}$.}
\end{ex}
\begin{proof} Let $F$ be a field and $R =  M_n(F)$ for some positive integer $n$. Then $J(R) = 0$ and the matrix unit $E_{1n}$ belongs to $R^{qnil}$ but not $J(R)$.
\end{proof}

\noindent We now mention some of the known facts about
quasinilpotents for an easy reference.
\begin{prop}\label{nilö} \begin{enumerate} \item Let $R$ be a ring, $a\in R$ and $n$ a positive integer.
If $a^n\in R^{qnil}$, then $a\in R^{qnil}$, in particular every
nilpotent element is in $R^{qnil}$ {\rm (}\cite[Proposition
2.7]{Cu}{\rm)}. \item  If $R$ is a local ring, then $U(R)\cap
R^{qnil} = \emptyset$ and $R = U(R)\cup R^{qnil}$
{\rm(}\cite[Theorem 3.2]{Cu}{\rm )}.\item  Let $R$ be a ring, $a$,
$b\in R$. Then $ab\in R^{qnil}$ if and only if $ba\in R^{qnil}$
{\rm(}\cite[Lemma 2.2]{LZ}{\rm)}. \item  Let $a\in R^{qnil}$ and
$r\in U(R)$. Then $r^{-1}ar\in R^{qnil}$ {\rm (}\cite[Lemma
2.3]{Cu}{\rm)}. \item Let $e^2=e\in(R)$. Then $(eRe)^{qnil} =
(eRe)\cap R^{qnil}$ {\rm(}\cite[Lemma 3.5]{ZC}{\rm)}.
\end{enumerate}
\end{prop}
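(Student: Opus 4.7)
The plan is to verify each of the five clauses, all of which are standard results on quasinilpotent elements and appear with citations; the real work is to identify the three key techniques behind them.

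For clause (1), given $a^n \in R^{qnil}$ and $x \in \text{comm}(a)$, I would set $u = -ax$ and exploit the factorization $1 - u^n = (1-u)(1 + u + \cdots + u^{n-1})$. Because $x$ commutes with $a$, we have $x^n \in \text{comm}(a^n)$, and $u^n = (-1)^n a^n x^n$, so the hypothesis forces $1 - u^n \in U(R)$. The two factors on the right commute, and whenever two commuting elements of a ring have an invertible product each of them is itself invertible (a one-sided inverse of either factor immediately upgrades to a two-sided inverse via the commutation). Hence $1 - u = 1 + ax \in U(R)$, proving $a \in R^{qnil}$. The inclusion $N(R) \subseteq R^{qnil}$ is then the special case where $a^n = 0 \in R^{qnil}$.

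For clause (2), in a local ring every non-unit lies in $J(R) \subseteq R^{qnil}$, so $R = U(R) \cup R^{qnil}$. The disjointness $U(R) \cap R^{qnil} = \emptyset$ holds in any ring: for $u \in U(R)$ one has $-u^{-1} \in \text{comm}(u)$ but $1 + u(-u^{-1}) = 0 \notin U(R)$. Clauses (4) and (5) follow directly from the definitions: an inner automorphism $x \mapsto r^{-1}xr$ carries $\text{comm}(a)$ bijectively to $\text{comm}(r^{-1}ar)$ and preserves units, while for a corner ring one checks $\text{comm}_{eRe}(a) = \text{comm}_R(a) \cap eRe$ (using $ea = ae = a$), and $e + ax$ is a unit of $eRe$ exactly when $1 + ax = (1-e) + (e+ax)$ is a unit of $R$, since these two descriptions differ by the orthogonal idempotent $1-e$.

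The main obstacle is clause (3), the Jacobson-style interchange $ab \leftrightarrow ba$. The technique I would follow is: given $y \in \text{comm}(ba)$, form $z = ayb$ and verify $z \in \text{comm}(ab)$ by reordering with $bay = yba$, namely $ab \cdot z = a(ba)yb = a(yba)b = z \cdot ab$. The hypothesis then produces an inverse $c$ of $1 + abz = 1 + a(bayb)$, and the classical Jacobson lemma identity $(1 + YX)^{-1} = 1 - Y(1 + XY)^{-1}X$ (applied with $X = a$, $Y = bayb$) yields an explicit inverse of $1 + bayba$. The remaining and genuinely delicate bookkeeping is to refine this to an inverse of $1 + bay$ itself; I would trust the formal manipulation carried out in \cite{LZ}, whose symmetry in $a,b$ also gives the converse.
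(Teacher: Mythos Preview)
The paper does not prove this proposition; it simply records the five statements with citations to \cite{Cu}, \cite{LZ}, and \cite{ZC}, so there is no argument in the paper to compare against. Your sketches for (1), (2), and (4) are correct. In (5) there is a small gap: your commutant equality and unit-transfer observation yield $eRe \cap R^{qnil} \subseteq (eRe)^{qnil}$, but for the reverse inclusion you must handle an arbitrary $x \in \text{comm}_R(a)$, not only those lying in $eRe$. The fix is to observe that $a = eae$ forces $ax = a(exe)$ with $exe \in \text{comm}_{eRe}(a)$, reducing to the case already covered.

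In (3), what you call ``delicate bookkeeping'' and defer to \cite{LZ} is actually the substantive step, and it reuses the mechanism of your own proof of (1). A cleaner route than the one you outline: by Jacobson's lemma applied to the pair $(b, ay)$, the invertibility of $1 + bay$ is equivalent to that of $1 + ayb$. You have already checked that $ayb \in \text{comm}(ab)$; now compute $(ayb)^2 = a(yba)yb = a(bay)yb = (ab)(ay^2b)$, and since $y^2 \in \text{comm}(ba)$ the same check shows $ay^2b \in \text{comm}(ab)$. Hence $1 - (ayb)^2 = 1 + (ab)(-ay^2b)$ is a unit by the hypothesis $ab \in R^{qnil}$, and the factorization $(1+ayb)(1-ayb)$ into commuting factors forces $1 + ayb \in U(R)$.
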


\noindent In the following, we determine quasinilpotent elements
in some classes of rings.
\begin{lem}\label{nil} Let $R$ be a ring. Then the following hold.
\begin{enumerate} \item  $\left\{\begin{bmatrix}a&b\\0&c\end{bmatrix}\mid a, c\in R^{qnil}, b\in R\right\}
\subseteq U_2(R)^{qnil}$.
\item  $\left\{\begin{bmatrix}a&b\\0&a\end{bmatrix}\mid a\in R^{qnil}, b\in R\right\}\subseteq D_2(R)^{qnil}$.
\item  Let $A = \begin{bmatrix}a&b\\0&a\end{bmatrix}\in D_2(R)^{qnil}$ with $b\in $ comm$^2(a)$. Then $a\in R^{qnil}$.
\end{enumerate}
\end{lem}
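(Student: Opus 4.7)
For parts (1) and (2), the plan is the direct one: take an arbitrary matrix $X$ in the commutant of $A$ inside the relevant subring and compute $I+AX$, noting that it is again upper triangular, so its invertibility in $U_2(R)$ (resp.\ $D_2(R)$) reduces to the invertibility of its diagonal entries in $R$. In part (1), writing $X=\begin{bmatrix}x&y\\0&z\end{bmatrix}$, the equation $AX=XA$ forces $ax=xa$ and $cz=zc$, i.e.\ $x\in\mathrm{comm}(a)$ and $z\in\mathrm{comm}(c)$; hence $1+ax$ and $1+cz$ are invertible because $a,c\in R^{qnil}$, and the standard formula for inverting a $2\times 2$ upper triangular matrix gives an explicit inverse in $U_2(R)$. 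Part (2) is identical except that both diagonal entries are $1+ax$; the inverse $\begin{bmatrix}u^{-1}&-u^{-1}vu^{-1}\\0&u^{-1}\end{bmatrix}$ of $\begin{bmatrix}u&v\\0&u\end{bmatrix}$ already lies in $D_2(R)$ when $u=1+ax$ is invertible, so the argument stays inside $D_2(R)$.

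For part (3), the main idea is to produce, from an arbitrary $x\in\mathrm{comm}(a)$, a matrix in $D_2(R)$ that commutes with $A$ and witnesses the invertibility of $1+ax$. The natural candidate is the scalar block $X=\begin{bmatrix}x&0\\0&x\end{bmatrix}$. A quick computation shows $AX-XA=\begin{bmatrix}0&bx-xb\\0&0\end{bmatrix}$, so $X$ commutes with $A$ precisely when $x$ commutes with $b$. This is exactly where the hypothesis $b\in\mathrm{comm}^2(a)$ is used: since $x\in\mathrm{comm}(a)$, the double commutant condition forces $bx=xb$, so $X\in\mathrm{comm}(A)\cap D_2(R)$.

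Because $A\in D_2(R)^{qnil}$, the matrix $I+AX=\begin{bmatrix}1+ax&bx\\0&1+ax\end{bmatrix}$ must be invertible in $D_2(R)$. Writing its inverse as $\begin{bmatrix}p&q\\0&p\end{bmatrix}\in D_2(R)$ and reading off the $(1,1)$ and $(2,2)$ entries of the product, we get $(1+ax)p=p(1+ax)=1$, so $1+ax\in U(R)$. Since $x\in\mathrm{comm}(a)$ was arbitrary, this proves $a\in R^{qnil}$.

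The only subtle point is the choice of test element in (3); once one realises that $\mathrm{comm}^2(a)$ is tailor-made to let a scalar-block $X$ commute with $A$, the rest is formal. So there is no real obstacle beyond identifying this test matrix, and the proof is otherwise a routine diagonal-entry extraction.
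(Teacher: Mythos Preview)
Your proof is correct and follows essentially the same approach as the paper. For parts (1) and (2) the arguments coincide almost verbatim (commutation forces the diagonal entries of $X$ into the relevant commutants, whence the diagonal of $I+AX$ is invertible and an explicit triangular inverse exists in the appropriate subring); for part (3) the paper simply writes ``Clear'', and the argument you supply---testing with the scalar block $X=\begin{bmatrix}x&0\\0&x\end{bmatrix}$ and using $b\in\mathrm{comm}^2(a)$ to ensure $X\in\mathrm{comm}(A)$---is precisely the intended one.
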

\begin{proof} (1) Let $a$, $c\in R^{qnil}$, $b\in R$,
$A = \begin{bmatrix}a&b\\0&c\end{bmatrix}\in U_2(R)$ and $B =
\begin{bmatrix}x&y\\0&z\end{bmatrix}\in $ comm$(A)$. Then $AB = BA$
implies $1 - ax$ and $1 - cz$ are invertible. Hence $I - AB$ is
invertible. So $A\in R^{qnil}$.  \\(2) Let $A =
\begin{bmatrix}a&b\\0&a\end{bmatrix}\in D_2(R)$ with $a\in
R^{qnil}$, $b\in R$ and $B =
\begin{bmatrix}x&y\\0&x\end{bmatrix}\in $ comm$(A)$. Then $AB = BA$
implies $x\in $ comm$(a)$. Then $1 - ax$ is invertible. Hence $I_2
- AB$ is invertible. So $A\in D_2(R)^{qnil}$.\\(3) Clear.
\end{proof}

\begin{prop}\label{ilk} Let $(R_i)_{i\in I}$ be a family of rings for some index set $I$ and
let $R = \prod_{i\in I}R_i$. Then $R^{qnil} = \prod_{i\in
I}R^{qnil}_i$.
\end{prop}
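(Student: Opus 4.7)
My plan is to establish the two inclusions separately by unpacking the definition of quasinilpotent componentwise. The key observation is that an element $x = (x_i) \in R$ commutes with $a = (a_i) \in R$ if and only if $x_i \in \mathrm{comm}(a_i)$ in $R_i$ for every $i \in I$, and that invertibility in the product ring is likewise componentwise. In particular, $\mathrm{comm}(a) = \prod_{i\in I}\mathrm{comm}(a_i)$ as subsets of $R$.

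For the inclusion $R^{qnil} \subseteq \prod_{i\in I}R_i^{qnil}$, I fix $a = (a_i) \in R^{qnil}$, pick an arbitrary index $i_0 \in I$, and take any $y \in \mathrm{comm}(a_{i_0})$ inside $R_{i_0}$. The trick is to extend $y$ by zeros: define $x \in R$ by setting its $i_0$-coordinate equal to $y$ and all other coordinates equal to $0$. Each zero coordinate trivially commutes with the corresponding $a_j$, so $x \in \mathrm{comm}(a)$. Hence $1 + ax$ is invertible in $R$, and reading off the $i_0$-coordinate of the inverse shows that $1 + a_{i_0} y$ is invertible in $R_{i_0}$. Since $y$ was arbitrary in $\mathrm{comm}(a_{i_0})$, this gives $a_{i_0} \in R_{i_0}^{qnil}$, and as $i_0$ was arbitrary, $a \in \prod_{i\in I}R_i^{qnil}$.

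For the reverse inclusion, suppose $a = (a_i)$ with $a_i \in R_i^{qnil}$ for every $i$, and let $x = (x_i) \in \mathrm{comm}(a)$ be arbitrary. Componentwise commutativity forces $x_i \in \mathrm{comm}(a_i)$, so $1 + a_i x_i \in U(R_i)$ for each $i$; assembling the inverses coordinatewise produces an inverse for $1 + ax$ in $R$. Therefore $a \in R^{qnil}$.

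No serious obstacle is anticipated here; the argument is a routine componentwise unwinding of the definition. The only step requiring a small idea is the extension-by-zeros construction in the forward direction, which is what allows invertibility information in the full product to be localized to a single factor.
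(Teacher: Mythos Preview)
Your proof is correct and follows essentially the same approach as the paper: both arguments rest on the componentwise characterizations $\mathrm{comm}((a_i)) = \prod_i \mathrm{comm}(a_i)$ and $U(\prod_i R_i) = \prod_i U(R_i)$. The paper's proof is merely terser, leaving the extension-by-zeros step implicit in the phrase ``so the result follows,'' whereas you spell it out explicitly.
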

\begin{proof} Let $(a_i)$, $(x_i)\in R$. Then $(x_i)\in$ comm$(a_i)$ if and only if $x_i\in$ comm$(a_i)$ for
all $i\in I$. Hence $1 + (a_i)(x_i)$ is invertible in $R$ if and only if
$1 + a_ix_i$ is invertible in $R_i$ for every $i\in I$. So the
result follows.
\end{proof}

\noindent Let $R$ be  an algebra over a commutative ring $S$. The
{\it Dorroh extension} (or {\it ideal extension}) of $R$ by $S$
denoted by $I(R, S)$ is the direct product $R\times S$ with usual
addition and  multiplication defined by $(a_1, b_1)(a_2, b_2) =
(a_1a_2 + b_1a_2 + b_2a_1, b_1b_2)$ for $a_1$, $a_2\in R$ and
$b_1$, $b_2\in S$.
\begin{lem}\label{IR} Let $I(R, S)$ be an ideal extension. Then the following hold. \begin{enumerate}
\item [(1)] For $(a, b)\in I(R, S)$, $(c, d)\in $ comm$(a, b)$ if and only if $c\in $ comm$(a)$.
\item [(2)] $(a, b)$ has an inverse $(c, d)$ in $I(R, S)$ if and only if $(a + b)(c + d) = 1 = (c + d)(a + b)$ and $bd = db = 1$.
\end{enumerate}
\end{lem}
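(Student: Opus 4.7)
The plan is a direct computation in both parts, using the Dorroh multiplication rule together with the standing fact that $R$ is an $S$-algebra, so the image of $S$ lies in the center of $R$. For part (1), I would write $(a,b)(c,d) = (ac + bc + da,\, bd)$ and $(c,d)(a,b) = (ca + da + bc,\, db)$ directly from the definition. The $S$-components agree automatically since $S$ is commutative, and the $R$-components differ only in the terms $ac$ versus $ca$ (the cross terms $bc$ and $da$ appear in both). Hence the two products are equal iff $ac = ca$, i.e., $c \in$ comm$(a)$, which gives part (1).

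For part (2) I would first identify the identity of $I(R, S)$. A short check from the rule shows $(0, 1_S)(a, b) = (a, b) = (a, b)(0, 1_S)$, so $(0, 1_S)$ is the identity. Thus $(c, d)$ is an inverse of $(a, b)$ iff $(a, b)(c, d) = (0, 1_S) = (c, d)(a, b)$. Expanding both products yields the scalar conditions $bd = 1_S = db$ together with the $R$-level conditions $ac + bc + da = 0$ and $ca + da + bc = 0$.

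To convert these into the claimed form, I would embed $S$ into $R$ via $s \mapsto s \cdot 1_R$ and expand $(a+b)(c+d) = ac + ad + bc + bd$. Using centrality of $S$ in $R$ (so $da = ad$) together with $bd \cdot 1_R = 1_R$, the equation $(a+b)(c+d) = 1_R$ becomes $ac + ad + bc = 0$, matching the $R$-level condition extracted from $(a,b)(c,d) = (0, 1_S)$; the reverse product is handled analogously. The only subtlety, and the main obstacle, is bookkeeping the two different roles played by elements of $S$ --- as scalars acting on $R$ in products like $bc$, and as ring elements (via $b \cdot 1_R$) in the sums $a + b$ and $c + d$ --- and consistently invoking centrality of the image of $S$ in $R$ to reconcile them.
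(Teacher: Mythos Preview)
Your proposal is correct and follows essentially the same approach as the paper's proof: both parts are handled by direct expansion of the Dorroh multiplication, cancellation of the cross terms $bc$ and $da$ (using commutativity of $S$ and centrality of the image of $S$ in $R$), and in part~(2) the rewriting $ac + da + bc + bd = (a+b)(c+d)$ to pass between the $R$-component equation and the condition $(a+b)(c+d)=1$. Your treatment is in fact slightly more careful than the paper's, which only spells out the $(a+b)(c+d)=1$ direction explicitly and leaves the symmetric $(c+d)(a+b)=1$ condition implicit.
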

\begin{proof} (1) $(c, d)\in $ comm$(a, b)$ if and only if $(a, b)(c, d) = (c, d)(a, b)$ if and only if $ac + da + bc = ca + da + bc$ and $bd = db$ if and only if $ac = ca$ and $bd = db$ if and only if  $c\in $ comm$(a)$.\\
(2) $(a, b)(c, d) = (0, 1)= (c, d)(a, b)$ if and only if $ac + da
+ bc = ca + da + bc = 0$ and $bd = db = 1$ if and
 only if $ac + da + bc + bd + (-bd) = (a + b)(c + d) - 1 = 0$ and $bd = db = 1$.
\end{proof}
\begin{prop}\label{nil2} Let $I(R, S)$ be an ideal extension of $R$ by $S$. Then
\begin{enumerate}
\item [(1)] $(R, 0)^{qnil} = (R, 0)\cap I(R, S)^{qnil}$.
\item [(2)] $(0, S)\cap I(R, S)^{qnil}\subseteq (0, S)^{qnil}$.
\end{enumerate}
\end{prop}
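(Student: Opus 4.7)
My plan is to exploit the corner-subring criterion in Proposition \ref{nilö}(5) for part (1) and the explicit characterization of invertibility in Lemma \ref{IR}(2) for part (2).

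For part (1), the idea is to realize $(R,0)$ as a corner of $I(R,S)$. Set $e=(1_R,0)$; a direct multiplication using the formula for the product in $I(R,S)$ yields $e^2=e$. For any $(a,b)\in I(R,S)$, one computes
\[
e(a,b)e \;=\; (1_R,0)(a,b)(1_R,0) \;=\; (a+b\cdot 1_R,\,0),
\]
and since $R$ is an $S$-algebra, $a+b\cdot 1_R$ ranges over all of $R$. Hence $eI(R,S)e = (R,0)$, a corner subring with multiplicative identity $e$ that is canonically isomorphic to $R$; in particular its quasinilpotent set is $(R,0)^{qnil}$. The desired equality is then immediate from Proposition \ref{nilö}(5).

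For part (2), the subring $(0,S)$ is commutative, contains the identity $(0,1)$ of $I(R,S)$, and is canonically isomorphic to $S$. Take any $(0,b)\in(0,S)\cap I(R,S)^{qnil}$. Since $S$ is commutative, every $(0,d)\in(0,S)$ commutes with $(0,b)$ inside $(0,S)$, and by Lemma \ref{IR}(1) it also commutes with $(0,b)$ inside $I(R,S)$. The hypothesis therefore gives that
\[
(0,1)+(0,b)(0,d)\;=\;(0,\,1+bd)
\]
is invertible in $I(R,S)$. Applying Lemma \ref{IR}(2) to this inverse forces $1+bd$ to be a unit of $S$, so $(0,1+bd)$ is already invertible inside $(0,S)$ with inverse $(0,(1+bd)^{-1})$. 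This verifies the defining condition for $(0,b)\in(0,S)^{qnil}$.

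The delicate point is tracking which identity is in play. In part (1), $(R,0)$ has identity $(1_R,0)\ne(0,1)$, so a naive "subring inherits quasinilpotents" argument is unavailable and one must pass through the idempotent decomposition to invoke Proposition \ref{nilö}(5). In part (2), $(0,S)$ does share the identity $(0,1)$ with $I(R,S)$, but invertibility in the larger ring is a priori weaker than invertibility in the smaller one; the inclusion is only one-sided precisely because Lemma \ref{IR}(2) cleanly decouples the two components and guarantees that the $S$-component of the unit is already a unit of $S$. I expect no genuine obstacle beyond this bookkeeping.
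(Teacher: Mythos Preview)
Your proof is correct. Part (2) follows the same line as the paper: both use that $(0,1+bd)$ is invertible in $I(R,S)$ and then extract that $1+bd$ is already a unit in $S$; you invoke Lemma \ref{IR}(2) directly, while the paper recomputes the inverse by hand and observes that its $R$-component vanishes.

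Part (1), however, is a genuinely different and cleaner argument. The paper proves both inclusions by explicit calculation: for $(R,0)^{qnil}\subseteq I(R,S)^{qnil}$ it takes $(x,0)$ quasinilpotent in $(R,0)$ and, given $(a,b)\in\mathrm{comm}(x,0)$, uses that $a+b\in\mathrm{comm}(x)$ to build an inverse of $(0,1)+(x,0)(a,b)$ from that of $1+x(a+b)$; for the reverse inclusion it unwinds the inverse of $(rx,1)$ to produce an inverse of $1+rx$ in $R$. Your route sidesteps all of this by observing that $e=(1_R,0)$ is an idempotent with $eI(R,S)e=(R,0)$, so Proposition \ref{nilö}(5) gives the equality in one stroke. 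The advantage of your approach is brevity and conceptual clarity; the paper's hands-on computation, on the other hand, does not rely on the cited corner-ring lemma from \cite{ZC} and makes the mechanism visible, which feeds directly into the subsequent Theorem \ref{IRS}.
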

\begin{proof} (1) Let $(x, 0)\in (R, 0)^{qnil}$ and $(a, b)\in I(R, S)$
with $(a, b)\in $ comm$(x, 0)$. Then $a\in $ comm$(x)$ and so $1 +
xa$ is invertible in $R$. We prove $(0, 1) + (x, 0)(a, b)$ is
invertible. Since $S$ lies in the center of $R$, $a + b\in $
comm$(x)$. Hence $1 + x(a + b)$ is invertible, say $(1 + x(a +
b))(u + 1) = (u + 1)(1 + x(a + b)) = 1$. This implies that $u(x(a
+ b)) + x(a + b) + u = 0$. Hence  $((0, 1) + (x, 0)(a, b))(u, 1) =
(u, 1)((0, 1) + (x, 0)(a, b)) = (0, 1)$ for all $(a, b)\in $
comm$(x, 0)$. So $(x, 0)\in I(R, S)^{qnil}$.

Conversely, let $(x, 0)\in (R, 0)\cap I(R, S)^{qnil}$ and $(r,
0)\in $ comm$(x, 0)$. Hence $(0, 1) + (x, 0)(r, 0)= (rx, 1)$ is
invertible. Let $(a, b)$ be the inverse of $(rx, 1)$. Then $(rx,
1)(a, b) = (0, 1)$ implies $b = 1$ and $rxa + rx + a = 0$. $(a, 1)
(rx, 1) = (0, 1)$ implies $arx + a + rx = 0$. Hence $(1 + a)(1 +
rx) = 1$ and $(1 + rx)(1 + a) = 1$.
Hence $(1, 0) + (r, 0)(x, 0)$ is invertible in $(R, 0)$ for all $(r, 0)\in $ comm$(x, 0)$. Thus $(x, 0)\in (R,0)^{qnil}$ or $(R, 0)\cap I(R, S)^{qnil}\subseteq (R, 0)^{qnil}$.\\
(2) Let $(0, s)\in (0, S)\cap I(R, S)^{qnil}$. Let $(0, b)\in (0,
S)$ with $(0, b)\in $ comm$(0, s)$. Then $(0, 1) + (0, s)(0, b)=
(0, 1 + sb)$ is invertible in $I(R, S)$. There exists $(u, v)\in
I(R, S)$ such that $(0, 1 + sb)(u, v) = ((1 + sb)u, (1 + sb)v) =
(0, 1) = (u, v)(0, 1 + sb) = ((1 + sb)u, v(1 + sb))$. Hence $(1 +
sb)v = v(1 + sb) = 1$ and $(1 + sb)u = 0$. Hence $u = 0$. Thus
$(0, 1) + (0, s)(0, b)= (0, 1 + sb)$ is invertible in $(0, S)$
with inverse $(0, v)\in (0, S)$. It follows that $(0, s)\in (0,
S)^{qnil}$ and so $(0,S)\cap I(R, S)^{qnil}\subseteq (0,
S)^{qnil}$.
\end{proof}

\noindent The following gives us  necessary and sufficient
conditions for $(0, S)^{qnil}$ to be contained in $I(R,
S)^{qnil}$.
\begin{thm}\label{IRS} Let $I(R, S)$ be the ideal extension of an algebra $R$ by a commutative
 ring $S$. Let $(0, i)\in (0, S)^{qnil}$. Then $(0, i)\in I(R, S)^{qnil}$ if and only if for every
 $(a, b)\in $ comm$(0, i)$ there exists $(u, v)\in I(R, S)$ such that $(i(a + b) + 1)(u + v) = (1 + ib)v = 1$.
\end{thm}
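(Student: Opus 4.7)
The plan is to reduce the statement to a direct unpacking of the definition of quasinilpotence via Lemma \ref{IR}. By definition, $(0,i)\in I(R,S)^{qnil}$ means that $(0,1)+(0,i)(a,b)$ is invertible in $I(R,S)$ for every $(a,b)\in\operatorname{comm}(0,i)$. The first step is to identify this commutant explicitly: by Lemma \ref{IR}(1), a pair $(a,b)$ lies in $\operatorname{comm}(0,i)$ iff $a\in\operatorname{comm}(0) = R$, so $\operatorname{comm}(0,i) = I(R,S)$ and the condition must be tested for all $(a,b)$.

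Next, using the Dorroh multiplication rule and the fact that $S$ is central in $R$, I would compute
\[
(0,1)+(0,i)(a,b) = (ia,\ 1+ib).
\]
Applying Lemma \ref{IR}(2) with this pair, invertibility is equivalent to the existence of $(u,v)\in I(R,S)$ satisfying $(ia+1+ib)(u+v) = (u+v)(ia+1+ib) = 1$ together with $(1+ib)v = v(1+ib) = 1$. Using $ia+ib = i(a+b)$, the first pair becomes $(1+i(a+b))(u+v) = (u+v)(1+i(a+b)) = 1$, which is precisely (one side of) the first equation in the statement, while the second pair is (one side of) the equation $(1+ib)v=1$.

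The main point that requires a moment of care is the passage between the one-sided equalities as written in the statement and the two-sided ones demanded by Lemma \ref{IR}(2). For the $S$-component this is automatic: $S$ is commutative, so $(1+ib)v = 1$ at once yields $v(1+ib) = 1$. For the $R$-component, I would exploit the standing hypothesis $(0,i)\in(0,S)^{qnil}$, which forces $1+ib$ to be two-sidedly invertible in $S$; combined with the decomposition $1+i(a+b) = ia + (1+ib)$, a short symmetric computation in $I(R,S)$ produces the missing side $(u+v)(1+i(a+b)) = 1$. Alternatively, one treats both sides in parallel from the outset, so no asymmetry ever arises. Aside from this bookkeeping, the proof is a straightforward line-by-line verification from Lemma \ref{IR}.
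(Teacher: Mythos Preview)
Your approach is essentially the same as the paper's: both compute $(0,1)+(0,i)(a,b)=(ia,\,1+ib)$ and translate invertibility of this element into the two scalar equations in the statement. The paper carries out the multiplication by hand rather than citing Lemma~\ref{IR}(2), and it does not make your observation that $\operatorname{comm}(0,i)=I(R,S)$ explicit, but the content is identical. Your extra care about one-sided versus two-sided inverses is in fact more than the paper provides---the paper's converse only exhibits a right inverse $(u,v)$ of $(ia,1+ib)$ and declares $(0,i)\in I(R,S)^{qnil}$ without further comment---so on that point you are being more scrupulous than the original, though your ``short symmetric computation'' is left as a sketch.
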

\begin{proof} Assume that  $(0, i)\in I(R, S)^{qnil}$.
Let $(a, b)\in $ comm$(0, i)$ in $I(R, S)$. Then $(0, 1) + (0,
i)(a, b)$ must be invertible. There exists $(u, v)\in I(R, S)$
such that $(0, 1) = ((0, 1) + (0, i)(a, b))(u, v)$. It follows
that $(0, 1) = ((0, 1) + (0, i)(a, b))(u, v) = (ia, 1 + ib)(u, v)
= (iau + (1 + ib)u + v(ia), (1 + ib)v)$. Then $iau + (1 + ib)u +
iav = 0$ and $(1 + ib)v = 1$. They lead us $((i(a + b) + 1)(u + v)
= (1 + ib)v$. Hence $(i(a + b) + 1)(u + v)$ is invertible.
Conversely, note that $iau + (1 + ib)u + iav = 0$ if and only if
$((i(a + b) + 1)(u + v) = (1 + ib)v$. Assume that for $(0, i)\in
(0, S)$ there exists $(u, v)\in I(R, S)$ such that  $(i(a + b) +
1)(u + v) = (1 + ib)v = 1$. Then by concealing paranthesis we may
reach that $(0, 1) =  ((0, 1)+ (0, i)(a, b))(u, v)$ for $(a, b)\in
$ comm$(0, i)$. Hence $(0, i)\in I(R, S)^{qnil}$.
\end{proof}

\noindent Let $R$ be a ring and $S$ a subring of $R$ with the same identity as that of $R$ and
$$T[R, S] = \{ (r_1, r_2, \cdots , r_n, s, s, \cdots) : r_i\in R, s\in S, n\geq 1, 1\leq i\leq n\}.$$ Then $T[R, S]$ is
a ring under the componentwise addition and multiplication. Note
that N$(T[R,S])= T[N(R), N(S)]$ and $C([T,S]) = T[C(R), C(R)\cap
C(S)]$.
\begin{prop}\label{TRS} Let $R$ be a ring and $S$ a subring of $R$ with the same identity as that of $R$.
\begin{enumerate}
\item If $A = (a_1, a_2, a_3, \dots, a_n, s, s, s, \dots)\in T[R, S]^{qnil}$, then $a_i\in R^{qnil}$ for $i = 1, 2, 3, \dots, n$ and $s\in S^{qnil}$.
\item If $a\in R^{qnil}$ and $s\in S^{qnil}$, then $A = (a, s, s, s, \dots,..)\in T[R,
S]^{qnil}$.
\end{enumerate}
\end{prop}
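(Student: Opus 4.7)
Both parts reduce to coordinate-wise analysis in $T[R,S]$, using that a commutation $AX = XA$ in $T[R,S]$ reduces to commutation at each coordinate, and that the eventual tail of any element of $T[R,S]$ must lie in $S$.

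For (1), to extract $a_i \in R^{qnil}$ from $A \in T[R,S]^{qnil}$, I would fix $i \in \{1,\ldots,n\}$ and, for any $r \in \text{comm}(a_i) \subseteq R$, form the test element $X \in T[R,S]$ equal to $r$ at position $i$ and $0$ elsewhere (with zero tail in $S$). A direct check gives $X \in \text{comm}(A)$, so the hypothesis forces $1_{T[R,S]} + AX$ to be invertible; reading the $i$-th coordinate yields $(1 + a_i r)^{-1} \in R$, hence $a_i \in R^{qnil}$. To get $s \in S^{qnil}$, for any $t \in \text{comm}(s) \cap S$ I would use the test element $Y \in T[R,S]$ that is $0$ in the first $n$ coordinates and has constant tail $t$; invertibility of $1_{T[R,S]} + AY$ in $T[R,S]$ together with the fact that its inverse has tail in $S$ produces $(1 + st)^{-1} \in S$.

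For (2), I would start from an arbitrary $X = (x_1,\ldots,x_m, t,t,\ldots) \in \text{comm}(A)$. The relation $AX = XA$ reads off as $x_1 \in \text{comm}(a)$, $s x_i = x_i s$ for middle indices, and $s t = t s$ with $t \in S$. I would then build the inverse of $1_{T[R,S]} + AX$ coordinate-by-coordinate: first coordinate $(1 + a x_1)^{-1} \in R$ from $a \in R^{qnil}$; middle coordinates $(1 + s x_i)^{-1} \in R$; constant tail $(1 + s t)^{-1} \in S$ from $s \in S^{qnil}$ using $t \in \text{comm}(s) \cap S$. Since the tail is constant and lies in $S$, the candidate sequence belongs to $T[R,S]$, giving $A \in T[R,S]^{qnil}$.

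The main obstacle is the middle-coordinate step of (2): inverting $1 + s x_i$ in $R$ for an arbitrary $x_i \in R$ commuting with $s$ asks for more than $s \in S^{qnil}$ on its face, since $x_i$ need not lie in $S$. I expect to close this either by an implicit centrality of $S$ in $R$ (which immediately fuses the $S$-invertibility with the needed $R$-level one), or by noting that in the application in mind the commutation in $T[R,S]$ confines the middle $x_i$'s enough for an $S$-inverse to serve as an $R$-inverse; everything else is routine coordinate-wise bookkeeping.
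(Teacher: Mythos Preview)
Your plan for (1) is the same coordinate-wise test as the paper's, only you use one coordinate at a time whereas the paper packs all $b_i$'s and the tail $t$ into a single $B$; both arguments are correct and essentially identical.

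For (2), the obstacle you flag is genuine and is \emph{not} resolved in the paper either. The paper simply asserts that ``$1+sb_i$ are invertible in $R$'' for $i\ge 2$, but this requires $s\in R^{qnil}$, not merely $s\in S^{qnil}$: the element $b_i$ lies in $R$, commutes with $s$ in $R$, and need not lie in $S$. Neither of your proposed fixes (centrality of $S$, or commutation forcing $b_i\in S$) holds in general. In fact the statement of part (2) is false as written: take $R=\mathbb{Q}$, $S=\mathbb{Z}_{(p)}$ (the localization at a prime $p$), $a=0\in R^{qnil}$, and $s=p\in J(S)\subseteq S^{qnil}$. Then $A=(0,p,p,\dots)\in T[R,S]$ commutes with $B=(0,-1/p,0,0,\dots)\in T[R,S]$, yet $1+AB=(1,0,1,1,\dots)$ is not invertible, so $A\notin T[R,S]^{qnil}$.

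The missing hypothesis is exactly $S^{qnil}\subseteq R^{qnil}$, which the paper itself imposes later (in the converse direction of Proposition~3.16) precisely to make this middle-coordinate step go through. With that hypothesis your argument for (2) is complete and matches the paper's line for line; without it, neither proof can be repaired.
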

\begin{proof} (1) Let $A = (a_1, a_2, a_3, \dots, a_n, s, s, s, \dots)\in T[R, S]^{qnil}$
and $b_i\in $ comm$(a_i)$ and $t\in $ comm$(s)$. Then $B = (b_1,
b_2, b_3, \dots, b_n, t, t, t, \dots)\in $ comm$(A)$. Let $1 = (1,
1, 1, \dots, 1, \dots)$ denote the identity of $ T[R, S]$. So $1 +
AB$ is invertible. Therefore $1 + a_ib_i$ is invertible for $i =
1, 2, 3, \dots, n$ and $1+st$ is invertible in $S$. Hence $a_i\in
R^{qnil}$ for $i = 1, 2, 3, \dots, n$ and $s\in S^{qnil}$.\\(2)
Let $a\in R^{qnil}$, $s\in S^{qnil}$, $A = (a, s, s, \dots)$. If
$B = (b_1, b_2, \dots, b_m, t, t, t, \dots.)\in T[R, S]$ lies in
comm$(A)$, then $b_1\in $ comm$(a)$, $b_i\in $ comm$(s)$  for $i =
2,3 ,\dots, m$ and $t\in $ comm$(s)$. Hence $1 + ab_1$ and  $1 +
sb_i$ are invertible in $R$ where $i = 2,3 ,\dots, m$ and $1 + st$
is invertible in $S$. Hence $1 + AB$ is invertible in $T[R, S]$.
So $A = (a, s, s, \dots)\in T[R, S]^{qnil}$.
\end{proof}
%\begin{prop}\label{prod}\cite{KUH} Let $(R_i)_{i\in I}$ be a family of rings for some index set $I$. Then $(\prod_{i\in I}R_i)^{qnil} = \prod_{i\in I}R_i^{qnil}$.
%\end{prop}
 %A ring R has stable range 1 if aR + bR = R with a,b ∈ R there exists a y ∈ R such that a + by ∈ R is invertible.
%????Lemma 2.4 If R has stable range 1, then J(R) = {x | x−u ∈ U(R) for any u ∈ U(R)}. Proof Let x ∈ R and x−u ∈ U(R) for any u ∈ U(R). Let r ∈ R. Then xR +(1−xr)R = R. Since R has %stable range 1, we have a y ∈ R such that u := x + (1−xr)y ∈ U(R). Hence, x−u = −(1−xr)y ∈ U(R), and then 1−xr ∈ U(R). Therefore, x ∈ J(R), and thus yielding the result.((Combining %Euclidean and adequate rings
%Huanyin CHEN1,∗, Marjan SHEIBANI2 TJM,Turk J Math (2016) 40: 506 – 516 ))???
\noindent Let $R$ be a ring with an endomorphism $\alpha$ and let
$H(R; \alpha)$ be the set of formal expressions of the type $f(x)
= \sum^{\infty}_{n=0}a_nx^n$ where $a_n\in R$ for all $n\geq 0$.
Define addition as componentwise and $*$-product on $H(R; \alpha)$
as follows: for $f(x) = \sum^{\infty}_{n=0}a_nx^n$ and $g(x) =
\sum^{\infty}_{n=0}b_nx^n$, $f*g  = \sum^{\infty}_{n=0}c_nx^n$
where $c_n = \sum^n_{n=0}\binom{n}ia_ib_{n-i}$. Then $H(R;
\alpha)$ becomes a ring with identity containing $R$ under these
two operations. The ring $H(R; \alpha)$ is called the {\it Hurwitz
series ring} over $R$. The {\it Hurwitz polynomial ring} $h(R;
\alpha)$ is the subring of $H(R; \alpha)$ consisting of formal
expressions of the form $\sum^n_{i=0}\binom{n}ia_ix^i$. Let
$\epsilon : H(R; \alpha)\rightarrow R$ defined by $\epsilon(f(x))
= a_0$. Then $\epsilon$ is a homomorphism with ker$(\epsilon)=
xH(R; \alpha)$ and $H(R; \alpha)/$ker$(\epsilon)\cong R$. There
exist one-to-one correspondences between $H(R; \alpha)$ and $R$
relating to invertible elements, commutants and ideals. Let
$R[[x;\alpha]]$ be the skew formal power series ring over $R$. The
sum is the same but multiplication in $H(R, \alpha)$ is similar to
the usual multiplication of $R[[x;\alpha]]$, except that binomial
coefficients appear in each term in the multiplication defined in
$H(R, \alpha)$. Also, there is a ring homomorphism $\epsilon$
between $R[[x; \alpha]]$ and $R$, defined by $\epsilon(f(x)) =
a_0$ where $f(x) = a_0 + a_1x + a_2x^2 + \cdots\in R[[x;
\alpha]]$. Clearly, $\epsilon$ is an onto map and  $R[[x;
\alpha]]/$ker$(\epsilon)\cong R$.

\begin{lem} Let $R$ be a ring and $\alpha$ a ring endomorphism of $R$. Then
\begin{enumerate}\item $U(H(R; \alpha)) = \epsilon^{-1}U(R)$,\item $U(R[[x; \alpha]]) = \epsilon^{-1}U(R)$.\end{enumerate}
\end{lem}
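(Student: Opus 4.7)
The plan is to prove both inclusions for each of (1) and (2), treating the two cases essentially in parallel since they only differ in the form of the convolution. The inclusion $U(H(R;\alpha))\subseteq \epsilon^{-1}U(R)$ (and analogously for $R[[x;\alpha]]$) is the easy direction: since $\epsilon$ is a ring homomorphism sending $1$ to $1$, the image of a unit is a unit, so $f\in U(H(R;\alpha))$ forces $a_0=\epsilon(f)\in U(R)$, i.e. $f\in \epsilon^{-1}U(R)$.

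For the reverse inclusion, suppose $f(x)=\sum_{n\geq 0}a_nx^n\in H(R;\alpha)$ with $a_0\in U(R)$. I would construct a two-sided inverse $g(x)=\sum_{n\geq 0}b_nx^n$ by recursion on $n$. Setting $b_0=a_0^{-1}$ makes the constant term of $f*g$ equal to $1$. For $n\geq 1$, expanding $(f*g)_n=\sum_{i=0}^{n}\binom{n}{i}a_i\alpha^i(b_{n-i})$ isolates a leading contribution $a_0b_n$ plus a tail depending only on $b_0,\ldots,b_{n-1}$; since $a_0$ is a unit, one can solve for $b_n$ uniquely so that $(f*g)_n=0$. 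This defines a right inverse $g$. Running the analogous recursion from the left produces a left inverse $h$; by associativity, $h=h*(f*g)=(h*f)*g=g$, so $g=h$ is a two-sided inverse, and $f\in U(H(R;\alpha))$.

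For part (2) the argument is the same, with the Hurwitz convolution replaced by the skew convolution $(fg)_n=\sum_{i=0}^{n}a_i\alpha^i(b_{n-i})$; again $(fg)_0=a_0b_0$ and $(fg)_n=a_0b_n+(\text{terms in }b_0,\ldots,b_{n-1})$, so a right inverse exists whenever $a_0\in U(R)$, and two-sidedness follows as above.

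The main obstacle is mostly notational rather than conceptual: one has to keep track of the endomorphism $\alpha$ inside the Hurwitz product (the formula displayed in the paper omits the $\alpha^i$, but in the standard Hurwitz construction the twist must be applied to $b_{n-i}$) and verify that the recursive step genuinely isolates a single unknown $b_n$ multiplied on the left by the unit $a_0$. Once this is in place, the recursion and the passage from one-sided to two-sided inverse are routine.
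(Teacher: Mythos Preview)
Your proposal is correct and supplies exactly the routine argument the paper has in mind; the paper's own proof consists solely of the sentence ``It is routine.'' Your recursive construction of the inverse (isolating $a_0b_n$ as the only term involving the unknown $b_n$ and solving since $a_0\in U(R)$), together with the standard one-sided-implies-two-sided step, is the expected verification, and your remark that the twist $\alpha^i$ is harmless here because the $i=0$ term is untouched by $\alpha$ is precisely the point that makes the recursion go through.
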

\begin{proof} It is routine.
\end{proof}

\noindent In the next result, we determine the quasinilpotent elements of
$H(R; \alpha)$ and $R[[x; \alpha]]$.
\begin{prop}\label{HR} \begin{enumerate}\item[(1)] Let $H(R; \alpha)$ be a Skew Hurwitz series ring over $R$.
Then $H(R; \alpha)^{qnil} = \epsilon^{-1}R^{qnil}$.\item[(2)] Let
$R[[x; \alpha]]$ be a Skew formal power series ring over $R$. Then
$R([[x; \alpha]])^{qnil} = \epsilon^{-1}R^{qnil}$. \end{enumerate}
\end{prop}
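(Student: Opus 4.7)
The plan is to work entirely through the evaluation map $\epsilon$ and the preceding unit lemma, which asserts $U(H(R;\alpha))=\epsilon^{-1}U(R)$ and $U(R[[x;\alpha]])=\epsilon^{-1}U(R)$. Since these unit characterizations have the same form, I would prove (1) in full and observe that (2) follows by the identical argument applied to $R[[x;\alpha]]$.

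For the inclusion $\epsilon^{-1}R^{qnil}\subseteq H(R;\alpha)^{qnil}$, I would take $f=\sum_{n\geq 0}a_nx^n$ with $a_0\in R^{qnil}$ and an arbitrary $g\in\mbox{comm}(f)$. Because $\epsilon$ is a ring homomorphism, $fg=gf$ forces $a_0\epsilon(g)=\epsilon(g)a_0$, so $\epsilon(g)\in\mbox{comm}(a_0)$. Quasinilpotence of $a_0$ then gives $1+a_0\epsilon(g)\in U(R)$, and since $\epsilon(1+fg)=1+a_0\epsilon(g)$, the unit lemma promotes this to $1+fg\in U(H(R;\alpha))$, establishing $f\in H(R;\alpha)^{qnil}$.

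For the reverse inclusion $H(R;\alpha)^{qnil}\subseteq\epsilon^{-1}R^{qnil}$, I would start from $f\in H(R;\alpha)^{qnil}$ with $a_0=\epsilon(f)$ and pick any $y\in\mbox{comm}(a_0)$. The idea is to lift $y$ to some $g\in\mbox{comm}(f)$ with $\epsilon(g)=y$, using the one-to-one commutant correspondence between $H(R;\alpha)$ and $R$ recorded in the paragraph preceding the unit lemma. Once such a $g$ is in hand, the quasinilpotence of $f$ yields $1+fg\in U(H(R;\alpha))$, and applying $\epsilon$ returns $1+a_0y\in U(R)$, so $a_0\in R^{qnil}$.

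The main obstacle is precisely this commutant-lifting step. In the untwisted case one can simply take $g$ to be the constant series $y$, but in the skew setting the commutation of a constant with $f=\sum a_nx^n$ imposes the extra relations $ya_n=a_n\alpha^n(y)$ for $n\geq 1$, so one must either invoke the asserted commutant correspondence as a black box or construct $g=y+b_1x+b_2x^2+\cdots$ recursively, solving the successive commutation equations for the $b_i$ order by order from the quasinilpotence data on $a_0$. With the lift secured, part (2) follows by the same skeleton, since the unit lemma for $R[[x;\alpha]]$ takes the same form.
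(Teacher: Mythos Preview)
Your approach matches the paper's: both inclusions run through $\epsilon$ and the unit lemma exactly as you outline. For the commutant-lifting step in the inclusion $H(R;\alpha)^{qnil}\subseteq\epsilon^{-1}R^{qnil}$, the paper simply takes $g$ to be the constant series $y$ and asserts directly that it lies in $\mathrm{comm}_{H(R;\alpha)}(f)$ (implicitly leaning on the one-to-one commutant correspondence stated just before the unit lemma), so your first option---treating that correspondence as a black box with the constant lift---is exactly what the paper does, and no recursive construction is carried out there.
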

\begin{proof} (1) Let $f(x) = a_0 + a_1x + a_2x^2 + \dots\in H(R; \alpha)^{qnil}$
and $r\in R$ with $r\in$ comm$_R(a_0)$. Then $r\in$ comm$_{H(R;
\alpha)}(a)$. Then $1 + f(x)r\in U(H(R; \alpha))$. Hence $1 +
a_0r\in U(R)$. So $a_0\in R^{qnil}$. Since $a_0 = \epsilon(f(x))$,
$f(x)\in \epsilon^{-1}(R^{qnil})$. Conversely, let $g(x) = b_0 +
b_1x + b_2x^2 + \dots\in \epsilon^{-1}(R^{qnil})$. Then
$\epsilon(g(x)) = b_0\in R^{qnil}$. Let $h(x) = c_0 + c_1x +
c_2x^2 + \dots\in$ comm$_{H(R,\alpha)}(g(x))$. Then $c_0\in$
comm$_R(b_0)$ and  so $1 + b_0c_0\in U(R)$. So $1 + g(x)h(x)\in
U(H(R,\alpha))$. Hence $g(x)\in H(R,\alpha)^{qnil}$. This
completes the proof.\\(2) Similar that of (1).
\end{proof}

\section{Qnil-duo Rings}

\noindent In this section, we deal with the right duo property on the set of
quasinilpotent elements. By this means, we give a generalization
of commutativity from the perspective of quasinilpotents.

\begin{df}{\rm  A ring $R$ is called {\it right qnil-duo}
if $R^{qnil}a\subseteq aR^{qnil}$ for every $a\in R$. Similarly,
$R$ is called {\it left qnil-duo}  if $aR^{qnil}\subseteq
R^{qnil}a$ for every $a\in R$. If $R$ is both right and left
qnil-duo, then it is called {\it qnil-duo}, i.e. $R^{qnil}a=
aR^{qnil} $ for every $a\in R$.}
\end{df}

\noindent The qnil-duo property of rings  is not left-right symmetric as the
following example shows.
\begin{ex}{\rm Let $S = F(t)$ denote the quotient field of the polynomial ring
$F[t]$ over a field $F$ and $\alpha\colon S\rightarrow S$ defined
by $\alpha(f(t)/g(t)) = f(t^2)/g(t^2)$. Let $R = S[[x; \alpha]]$
denote the skew power series ring with $xa = \alpha(a)x$ for $a\in
S$. Every element of $R$ is of the form $a =
\sum^{\infty}_{i=0}a_ix^i$. For any $r = a_0 +
\sum^{\infty}_{i=1}a_ix^i$ with $a_0\neq 0$ is invertible. Hence
$R^{qnil} = xR$. This ring is considered in  \cite[Lemma
1.3(3)]{CL}, \cite[Example 1]{KKL} and in \cite[Example
1.5]{HKKKL}.  As in the proof of \cite[Example 1]{KKL}, for
$tx^m\in tR^{qnil}$, there is no $g(x)\in R^{qnil}$ such that
$tx^m = g(x)t$. Hence $R$ is not left qnil-duo. We show that $R$ is
right qnil-duo. Let $f(x)\in R^{qnil}$, $g(x)\in R$. We show that there
exists $f_1(x)\in R^{qnil}$ such that $f(x)g(x) = g(x)f_1(x)$.
Assume that $g(x)$ is invertible. Then $f(x)g(x) =
g(x)(g(x)^{-1}f(x)g(x)\in g(x)R^{qnil}$, otherwise, let $g(x) =
h(x)x^m$ where $h(x) = a_0 + a_1x^+ a_2x^2 +\cdots$ is invertible.
Then $f(x)g(x) = f(x)h(x)x^m = f(x)x^mh_1(x) = x^mf_1(x)h_1(x)=
x^mh_1(x)(h_1(x)^{-1}f_1(x)h_1(x)) =
g(x)(h_1(x)^{-1}f_1(x)h_1(x))\in g(x)R^{qnil}$ since $f(x)$ is not
invertible and $f_1(x)$ is an application of $x^m$ to $f(x)$ from
the right, therefore $f_1(x)=x^kf_2(x)\in R^{qnil}$ for some
$k\geq 1$, by Proposition \ref{nilö}(4),
$h_1(x)^{-1}f_1(x)h_1(x)\in R^{qnil}$. Thus $R$ is right qnil-duo.
}
\end{ex}

\begin{exs}\label{ör1} {\rm (1) All commutative rings, all division rings are qnil-duo.\\
(2) There are local rings that are not right qnil-duo. }
\end{exs}
\begin{proof} (1) When $R$ is a commutative ring, it is both right and left qnil-duo.
If $R$ is a division ring, then $R^{qnil}=\{0\}$, therefore $R$ is
both right and left qnil-duo.\\(2) Let $A= \Bbb Z_4[x, y]$ be the
polynomial ring with non-commuting indeterminates $x$ and $y$ and
$I$ be the ideal generated by the set $\{x^3, y^2, yx, x^2 - xy,
x^2-2, 2x, 2y\}$. Consider the ring $R = A/I$. By \cite[Example
7]{XX}, $R$ is a local ring. It is easily checked that $R^{qnil} =
\{0, 2, x, y, 2+x, 2+y, 2 + x + y, x + y\}$ and $(R^{qnil})^2 \neq
0$. $2 + x$ belongs to $R^{qnil}$ since it is nilpotent. For $x\in
R^{qnil}$ and $y\in R$, $xy\in R^{qnil}y$. It is easily checked
that there is no $t\in R^{qnil}$ such that $xy = yt\in yR^{qnil}$.
Hence $R$ is not right qnil-duo.
\end{proof}
\begin{lem}  Let $R$ be a ring with $R^{qnil}$ central in $R$. Then $R$ is  qnil-duo.
\end{lem}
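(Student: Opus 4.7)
The plan is to observe that centrality of $R^{qnil}$ directly gives both inclusions in the definition of qnil-duo, essentially as a one-line verification. So the proof will be essentially trivial: I just need to unpack what $R^{qnil}$ being central means and match it against the definitions of right and left qnil-duo.

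First I would fix an arbitrary $a \in R$ and an arbitrary quasinilpotent $q \in R^{qnil}$. By the hypothesis that $R^{qnil}$ is central in $R$, every element of $R^{qnil}$ commutes with every element of $R$; in particular, $qa = aq$. This immediately yields $qa \in aR^{qnil}$ (writing $qa = aq$ with $q \in R^{qnil}$) and similarly $aq \in R^{qnil}a$ (writing $aq = qa$).

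Since these containments hold for arbitrary $q \in R^{qnil}$, I conclude $R^{qnil}a \subseteq aR^{qnil}$ and $aR^{qnil} \subseteq R^{qnil}a$. As $a \in R$ was arbitrary, $R$ is both right and left qnil-duo, hence qnil-duo by definition. There is no real obstacle here; the only thing worth noting is that the statement does not require any structural result about $R^{qnil}$ beyond the given centrality hypothesis, so no properties from Proposition \ref{nilö} are needed.
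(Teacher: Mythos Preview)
Your proof is correct and follows essentially the same approach as the paper's own proof: both simply use the centrality of $R^{qnil}$ to rewrite $qa$ as $aq$ (and vice versa). Your version is slightly more explicit in verifying both the right and left qnil-duo conditions, whereas the paper's proof only writes out one inclusion, but the argument is identical.
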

\begin{proof}  Assume that $R^{qnil}$ is central in $R$. Let $a\in R$ and $b\in R^{qnil}$. Then $b$ being central implies $ab = ba\in aR^{qnil}$.
\end{proof}
\begin{thm}\label{dik}  Let $\{R_i\}_{i\in I}$ be a family of rings for some index set $I$ and $R = \prod_{i\in I}R_i$. Then $R_i$ is right (resp., left) qnil-duo for each $i\in I$ if and only if $R$ is right (resp., left) qnil-duo.
\end{thm}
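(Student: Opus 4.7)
The plan is to reduce the statement to a componentwise argument using Proposition \ref{ilk}, which already identifies $R^{qnil}$ with $\prod_{i\in I} R_i^{qnil}$. Once we have this, the two directions are essentially book-keeping: a condition of the form $R^{qnil}a \subseteq a R^{qnil}$ holds in $R$ iff its coordinate projections hold in each $R_i$. I will carry out the right qnil-duo case explicitly; the left case is symmetric.

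For the forward direction (each $R_i$ right qnil-duo implies $R$ right qnil-duo), I would take arbitrary $a = (a_i) \in R$ and $q = (q_i) \in R^{qnil}$. By Proposition \ref{ilk}, each $q_i$ lies in $R_i^{qnil}$, so by hypothesis we can choose $r_i \in R_i^{qnil}$ with $q_i a_i = a_i r_i$. Setting $r = (r_i)$ and invoking Proposition \ref{ilk} again gives $r \in R^{qnil}$ with $qa = ar$, so $R^{qnil} a \subseteq a R^{qnil}$.

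For the converse, fix $j \in I$, $a_j \in R_j$, and $q_j \in R_j^{qnil}$. The one mild subtlety is to lift these to elements of $R$ that still live in the relevant sets: define $a = (a_i) \in R$ by $a_i = 0$ for $i \neq j$, and define $q = (q_i)$ similarly by padding with $0$. Since $0 \in R_i^{qnil}$ for every $i$, Proposition \ref{ilk} ensures $q \in R^{qnil}$. The hypothesis gives an $r = (r_i) \in R^{qnil}$ with $qa = ar$; comparing the $j$-th coordinate yields $q_j a_j = a_j r_j$, and Proposition \ref{ilk} forces $r_j \in R_j^{qnil}$. Hence $R_j^{qnil} a_j \subseteq a_j R_j^{qnil}$.

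The argument for left qnil-duo is identical with the inclusions reversed. The only nontrivial step is the appeal to Proposition \ref{ilk}; everything else is routine projection/lifting. I do not expect a real obstacle here, only the need to be explicit about padding coordinates with zero in the converse direction so that the lifted elements genuinely sit inside $R^{qnil}$.
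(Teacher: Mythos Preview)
Your proposal is correct and follows essentially the same approach as the paper: both directions rely on Proposition~\ref{ilk} to identify $R^{qnil}=\prod_i R_i^{qnil}$, the forward direction is handled coordinatewise, and the converse lifts $a_j$ and $q_j$ to $R$ by padding with zeros before projecting back. The paper's argument is identical in structure and detail.
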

\begin{proof} Assume that $R_i$ is right (resp., left) qnil-duo for each $i\in I$. Let $a = (a_i)\in R$, $b=(b_i)\in R^{qnil}$.
By Proposition \ref{ilk}, $b_i\in R_i^{qnil}$ for each $i\in I$.
By assumption there exists $c_i\in R_i^{qnil}$ such that $b_ia_i =
a_ic_i$ for each $i\in I$. Set $c = (c_i)$. Then $ba = ac\in
aR^{qnil}$. Hence $R^{qnil}a\subseteq aR^{qnil}$. Conversely,
suppose that $R$ is right qnil-duo. Let $a_i\in R_i$ and $b_i\in
R_i^{qnil}$ where $i\in I$. Let $a = (a_i)$, $b = (b_i)\in R$
where $i^{th}$-entry of $a$ is $a_i$ and the other entries are 0
and $i^{th}$-entry of $b$ is $b_i$ and the other entries are 0,
respectively. Then $a = (a_i)\in R$ and by Proposition \ref{ilk},
$b\in R^{qnil}$. Supposition implies there exists $c = (c_i)\in
R^{qnil}$ such that $ba = ac$. Comparings entries of both sides we
have $b_ia_i = a_ic_i$. By Proposition \ref{ilk}, $c_i\in
R_i^{qnil}$. Thus for each $i\in I$, $R_i$ is right qnil-duo.
Similarly, it is proven that for each $i\in I$, $R_i$ is left
qnil-duo.
\end{proof}

\noindent Recall that a ring $R$ is called {\it abelian} if every idempotent in $R$ is central.

\begin{thm}\label{abel} Let $R$ be a ring. Then the following hold.
\begin{enumerate}
\item[(1)] $ex - exe$ and $xe - exe\in R^{qnil}$ for every $x$ and  $e^2 = e\in R$.
\item[(2)] Right (resp., left) qnil-duo rings are abelian.
\item[(3)] Let R be a ring and $e\in$ Id$(R)$. If $R$ is right (resp., left) qnil-duo ring, then $eR$ and $(1 - e)R$ are right (resp., left) qnil-duo
rings. The converse holds if $e$ is central.
\end{enumerate}
\end{thm}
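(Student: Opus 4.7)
The plan has three parts following the statement. For (1), I square each element using $e^{2}=e$: expanding $(ex-exe)^{2}$ gives $exex-exexe-exex+exexe=0$, and identically $(xe-exe)^{2}=xexe-xexe-exexe+exexe=0$. Thus both elements are nilpotent, so Proposition~\ref{nilö}(1) places them in $R^{qnil}$.

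For (2), fix $e\in\mathrm{Id}(R)$ and $x\in R$; I want to show $ex=xe$. By (1), $ex-exe$ and $xe-exe$ both lie in $R^{qnil}$. The pivotal identity is $(ex-exe)(1-e)=ex-exe$, obtained by a quick $e^{2}=e$ calculation. Applying the right qnil-duo inclusion $R^{qnil}(1-e)\subseteq(1-e)R^{qnil}$ therefore gives $ex-exe=(1-e)r$ for some $r\in R^{qnil}$; left-multiplying by $e$ annihilates the right-hand side while preserving the left-hand side, forcing $ex=exe$. A parallel run on $xe-exe$, using $(xe-exe)e=xe-exe$ together with $R^{qnil}e\subseteq eR^{qnil}$ and then left-multiplying by $1-e$, produces $xe=exe$. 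Combining, $ex=xe$, so $e$ is central. The left qnil-duo case is handled symmetrically: one uses $eR^{qnil}\subseteq R^{qnil}e$ applied to $e(ex-exe)=ex-exe$ and right-multiplies by $1-e$, and dually for $xe-exe$.

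For (3), the forward direction is immediate from (2): a right (resp., left) qnil-duo $R$ is abelian, so $e$ is central and $R$ splits as the ring product $eR\times(1-e)R$; Theorem~\ref{dik} then carries the right (resp., left) qnil-duo property to each factor. For the converse, centrality of $e$ is assumed, so the same product decomposition $R\cong eR\times(1-e)R$ is available and Theorem~\ref{dik} lifts the property from the two factors back to $R$.

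The only nonroutine move is the trick in (2): choosing the multiplier ($1-e$ or $e$, on the side where it acts as the identity on the qnil element) so that the element is left unchanged, then pushing the quasinilpotent across via the qnil-duo hypothesis, and finally annihilating by the complementary idempotent. Everything else reduces to $e^{2}=e$ bookkeeping and an appeal to Theorem~\ref{dik}.
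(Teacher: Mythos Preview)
Your proof is correct and follows essentially the same route as the paper. The only minor difference is in part (1): you show directly that $(ex-exe)^2=0$ and $(xe-exe)^2=0$, whereas the paper works from the definition of $R^{qnil}$ by taking $t\in\mathrm{comm}(ex-exe)$ and arguing that $((ex-exe)t)^2=0$; since this ultimately rests on the same square-zero computation, your version is just a cleaner packaging of the same idea. Parts (2) and (3) match the paper's argument step for step.
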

\begin{proof} (1) Let $t\in $ comm$(ex - exe)$. Then $t(ex -exe) = (ex -exe)t$.
So we have $(t(xe - exe))^2 = 0$. Hence $1 - (ex - exe)t$  is
invertible and so $ex - exe\in R^{qnil}$. Similarly, $xe - exe\in
R^{qnil}$.\\(2) Let $e^2 = e\in R$. By hypothesis, $
R^{qnil}e\subseteq e R^{qnil}$. By (1), $xe - exe\in R^{qnil}$ for
all $x\in R$. It implies for any $x\in R$, there exists $t\in
R^{qnil}$ such that $(xe - exe)e = et$. Multiplying the latter
equality by $e$ from the left we have $et = 0$. So $xe = exe$.
Similarly,  $ex = exe$  since $ex - exe\in R^{qnil}$ by (1). Hence
$R$ is abelian.\\ (3) It is clear by Theorem \ref{dik}.
\end{proof}
\begin{cor} Let $R$ be a right (resp., left) qnil-duo ring and $e\in$ Id$(R)$. Then the corner ring $eRe$ is a right (resp., left) qnil-duo ring.
\end{cor}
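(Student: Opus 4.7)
The plan is to reduce the corollary directly to Theorem \ref{abel}, so that essentially no fresh argument is needed. First I would invoke Theorem \ref{abel}(2): any right (resp., left) qnil-duo ring is abelian, hence every idempotent of $R$ is central. Applied to the given $e\in$ Id$(R)$, centrality yields $er=re$ for every $r\in R$, whence $ere=re^2=re$. Consequently
\[ eRe=\{ere:r\in R\}=\{re:r\in R\}=eR, \]
so the corner ring $eRe$ (with identity $e$) coincides as a ring with the summand $eR$.

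Next I would apply Theorem \ref{abel}(3), which asserts that whenever $R$ is right (resp., left) qnil-duo, both $eR$ and $(1-e)R$ inherit the property. Combined with the identification above, $eRe$ is right (resp., left) qnil-duo, as desired.

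The only step worth pausing over is the ring-theoretic identity $eRe=eR$ under centrality of $e$; once this is in hand the result is immediate. A conceivable source of worry is whether the quasinilpotent set of the subring $eRe$ matches what one gets from $R$, since the definition of qnil-duo refers intrinsically to $(eRe)^{qnil}$; but this is already handled by Proposition \ref{nilö}(5), which records $(eRe)^{qnil}=eRe\cap R^{qnil}$. Hence there is no genuine obstacle, and the corollary is a direct repackaging of parts (2) and (3) of Theorem \ref{abel}.
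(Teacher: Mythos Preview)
Your proposal is correct and follows essentially the same route as the paper: invoke Theorem \ref{abel}(2) to make $e$ central, identify $eRe$ with $eR$, and then apply Theorem \ref{abel}(3). The paper's proof is terser (it leaves the identification $eRe=eR$ implicit), and your closing remark about Proposition \ref{nilö}(5) is not actually needed once Theorem \ref{abel}(3) is in hand, but nothing is wrong.
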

\begin{proof} The ring $R$ being right (resp., left) qnil-duo, $e$ is central in $R$ by Theorem \ref{abel}(2).
Hence Theorem \ref{abel}(3) completes the proof.
\end{proof}
\begin{thm} Every right (resp., left) qnil-duo ring is directly finite.
\end{thm}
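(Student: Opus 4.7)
The plan is to exploit the fact, already established in Theorem~\ref{abel}(2), that every right (resp., left) qnil-duo ring is abelian, i.e., every idempotent is central. Direct finiteness then follows by a short standard argument using the centrality of $ba$ whenever $ab=1$.

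More concretely, suppose $R$ is right qnil-duo (the left case is symmetric) and assume $a,b\in R$ satisfy $ab=1$. First I would observe that $e:=ba$ is an idempotent: $e^{2}=baba=b(ab)a=ba=e$. Since $R$ is abelian by Theorem~\ref{abel}(2), this idempotent $ba$ lies in the center of $R$.

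Next I would use this centrality to produce a direct computation showing $ba=1$. Multiplying $a$ on both sides in a convenient order: centrality gives $(ba)a=a(ba)=(ab)a=a$, so $(1-ba)a=a-a=0$. Then, using $ab=1$ once more,
\[
1-ba=(1-ba)\cdot 1=(1-ba)(ab)=\bigl((1-ba)a\bigr)b=0\cdot b=0,
\]
which yields $ba=1$, as required. The left qnil-duo case is handled identically, since Theorem~\ref{abel}(2) covers both sides.

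I do not expect a genuine obstacle here: the only ingredient beyond abelianness is the standard idempotent trick for direct finiteness, and the abelianness has already been supplied. The only point requiring a sentence of care is invoking Theorem~\ref{abel}(2) for the left qnil-duo version, so that a single argument covers both statements of the theorem.
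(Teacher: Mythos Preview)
Your proof is correct and follows essentially the same approach as the paper: invoke Theorem~\ref{abel}(2) to conclude the relevant idempotent arising from $ab=1$ is central, then use a short direct computation to force $ba=1$. The only cosmetic difference is that the paper works with the complementary idempotent $e=1-ba$ rather than $ba$; the resulting computations are equivalent.
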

\begin{proof} Let $R$ be a right qnil-duo ring and $a$, $b\in R$ with $ab = 1$. Set $e = 1 - ba$.
Then $e$ is an idempotent. By Theorem \ref{abel}, $e$ is central.
So $0 = ae = ea$. Hence $0 = a - ba^2$. Multiplying the latter by
$b$ from the right, we get $1 = ba$.
\end{proof}

\noindent There is a directly finite ring that is neither right nor left
qnil-duo.
\begin{ex} {\rm Consider the ring $R = M_2(\Bbb Z_2)$. Then $R$ is a directly finite ring but not abelian. Hence it is neither right nor left qnil-duo.}
\end{ex}
\noindent We apply Theorem \ref{abel} to show that full matrix rings and upper triangular matrix rings need not be right (resp., left)
qnil-duo. But there are some subrings of full matrix rings that are qnil-duo.
\begin{exs}\label{mat}{\rm \begin{enumerate} \item[(1)] For any ring $R$ and any positive integer $n$, $M_n(R)$ and $U_n(R)$ are neither right nor left qnil-duo.\
\item[(2)] If $R$ is commutative, then $V_n(R)$ is qnil-duo.
\item[(3)] $V_n(R[[x; \sigma]])$ is neither right nor left qnil-duo.\end{enumerate}}
\end{exs}
\begin{proof} (1) The rings $M_n(R)$ and  $U_n(R)$ are not abelian. By Theorem \ref{abel}(2), they are  neither right nor left qnil-duo.\\
(2) If $R$ is a commutative ring, $V_n(R)$ is also commutative,
therefore it is right and left qnil-duo.
\\(3) Let $R$ be a ring with an endomorphism $\sigma$. Assume that there exists $a_1\in R$ such that $\sigma(a_1)\notin a_1R$.
Let $A = \begin{bmatrix}x&x&x\\0&x&x\\0&0&x\end{bmatrix}\in
V_3(R[[x; \sigma]])^{qnil}$, $B =
\begin{bmatrix}a_1&a_2&a_3\\0&a_1&a_2\\0&0&a_1\end{bmatrix}\in
V_3(R[[x; \sigma]])$. Assume that there exists $D\in V_3(R[[x;
\sigma]])^{qnil}$  such that $AB = BD$. Then $(1, 1)$ entry of
$AB$ is $\sigma(a_1)x$ and that of $BD$ is $a_1xf(x)$ for some
$f(x)\in R[[x; \sigma]]$. This contradicts with the choice of
$\sigma$ and $a_1$. Therefore $V_3(R[[x; \sigma]])$ is not right
qnil-duo. Similarly, it can be shown that $V_3(R[[x; \sigma]])$ is
not left qnil-duo.
\end{proof}

\begin{thm}\label{d2q} Let $R$ be a local ring with $(R^{qnil})^2 = 0$. Then $R$ is right (resp., left) qnil-duo.
\end{thm}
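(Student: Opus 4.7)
The plan is to exploit the local dichotomy combined with the vanishing hypothesis $(R^{qnil})^2 = 0$. Since $R$ is local, Proposition~\ref{nilö}(2) tells us that every element of $R$ is either invertible or quasinilpotent, so for each fixed $a \in R$ I would split the verification of the inclusions $R^{qnil} a \subseteq a R^{qnil}$ and $a R^{qnil} \subseteq R^{qnil} a$ into these two cases.

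If $a \in U(R)$, my approach is to push $R^{qnil}$ across $a$ by conjugation: for any $b \in R^{qnil}$, write $ba = a(a^{-1} b a)$ and $ab = (a b a^{-1}) a$. Proposition~\ref{nilö}(4) applied with the unit $a$ (and with $a^{-1}$) gives that $a^{-1} b a$ and $a b a^{-1}$ both remain in $R^{qnil}$, which yields the desired inclusions on this branch.

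If instead $a \in R^{qnil}$, then the nilpotency hypothesis does the work for me: for every $b \in R^{qnil}$ both products $ab$ and $ba$ lie in $(R^{qnil})^2 = 0$. Since $0 \in R^{qnil}$, this gives $ba = a \cdot 0 \in a R^{qnil}$ and $ab = 0 \cdot a \in R^{qnil} a$ trivially.

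Combining the two branches exhausts every $a \in R$, establishing both the right and left qnil-duo properties. I do not anticipate a real obstacle: the argument reduces to routine bookkeeping once the local dichotomy $R = U(R) \cup R^{qnil}$ and the conjugation invariance of $R^{qnil}$ are at hand. The only thing to be mindful of is that the ``(resp., left)'' clause is genuinely handled symmetrically in the unit case, since Proposition~\ref{nilö}(4) works equally well with $a^{-1}$ in place of $a$.
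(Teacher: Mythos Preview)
Your proposal is correct and follows essentially the same route as the paper: both arguments invoke the local decomposition $R = U(R)\cup R^{qnil}$ from Proposition~\ref{nilö}(2), handle the invertible case via the conjugation trick and Proposition~\ref{nilö}(4), and dispose of the quasinilpotent case using $(R^{qnil})^2 = 0$. The only cosmetic difference is that the paper first separates the trivial subcase $ba = 0$ before splitting on whether $a$ is a unit, whereas you split directly on $a$; your organization is, if anything, slightly cleaner.
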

\begin{proof} By Proposition \ref{nilö}(2), we have $R = U(R)\cup R^{qnil}$. We prove $R^{qnil}a\subseteq aR^{qnil}$.
Let $a\in R$, $b\in R^{qnil}$.
If $ba = 0$, then we are done since  $ba = 0 = a0\in aR^{qnil}$. Otherwise, i.e., if $ba\neq 0$, then we divide the proof in some cases.\\
Case I. Let  $a\notin R^{qnil}$. Then $a\in U(R)$. By Proposition \ref{nilö}(4), $a^{-1}ba\in R^{qnil}$ since $b\in R^{qnil}$. Then $ba = a(a^{-1}ba)\in aR^{qnil}$.\\
Case II. Let  $a\in R^{qnil}$. By hypothesis, $ba = 0$, this contradicts with $ba\neq 0$. \\
Therefore $R$ is right qnil-duo ring. Similarly, we may prove $aR^{qnil}\subseteq R^{qnil}a$ for each $a\in R$.
\end{proof}

\noindent As an illustration of Theorem \ref{d2q}, we give the following
examples. Also, the condition $(R^{qnil})^2 = 0$ in Theorem
\ref{d2q} is not superfluous.

\begin{ex}{\rm (1) Consider the ring $R = \left\{\begin{bmatrix}a&b&c\\0&a&0\\0&0&a\end{bmatrix}\in D_3(\Bbb Z_4)\right\}$. Then \newline $R^{qnil} = \left\{\begin{bmatrix}a&b&c\\0&a&0\\0&0&a\end{bmatrix}\in R\mid a\in 2\Bbb Z_4, b, c\in \Bbb Z_4\right\}$. So $(R^{qnil})^2 = 0$. By Theorem \ref{d2q}, $R$ is qnil-duo. \\
(2) Let $R$ denote the ring in Examples \ref{ör1}. Then $R^{qnil} = \{0, 2, x, y, 2+x, 2+y, 2 + x + y, x + y\}$ and $(R^{qnil})^2\neq 0$ and $2 + x$ belongs to $R^{qnil}$ since it is nilpotent and $(2 + x)^2\neq 0$. Since $R$ is local and $R^{qnil}$ does not contain invertible elements, $R^{qnil} = J(R)$. To complete the proof we may assume that $x$, $y\in R^{qnil}$. Then $xy = 2$ and $xy\in R^{qnil}y$. It is easily checked that there is no $t\in R^{qnil}$ such that $xy = yt\in yR^{qnil}$. Hence $R$ is not right qnil-duo. Compare to Theorem \ref{d2q}.
}
\end{ex}

\noindent Note that by Theorem \ref{d2q}, if $R$ is a division ring, $D_2(R)$ is a qnil-duo ring.  One may ask whether $D_2(R)$ is qnil-duo over a domain $R$. The following example answers negatively.
\begin{ex}\label{we}{\rm  Consider the ring $D_2(R[[x]])$ in \cite[Example 1.4(1)]{KY}. It is proved that $D_2(R[[x]])$ is neither right nor left normal  property of elements on Jacobson radical. Since $J(D_2(R[[x]])) = D_2(R[[x]])^{qnil}$, $D_2(R[[x]])$ is neither right nor left qnil-duo.}
\end{ex}
\begin{thm}\label{ilkt} Let $R$ be a domain. If $D_2(R)$ is right (resp., left) qnil-duo, then $R$ is right (resp., left) qnil-duo.
\end{thm}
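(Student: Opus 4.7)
The plan is to reduce the qnil-duo property of $R$ to that of $D_2(R)$ by embedding elements of $R$ diagonally into $D_2(R)$. I will treat the right case; the left case is entirely symmetric.

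Fix $q\in R^{qnil}$ and $a\in R$; the goal is to produce $p\in R^{qnil}$ with $qa=ap$. Consider the scalar diagonal matrices
$$Q=\begin{bmatrix}q&0\\0&q\end{bmatrix},\qquad A=\begin{bmatrix}a&0\\0&a\end{bmatrix}$$
in $D_2(R)$. By Lemma \ref{nil}(2) (applied with $b=0$), the matrix $Q$ lies in $D_2(R)^{qnil}$. By the standing hypothesis that $D_2(R)$ is right qnil-duo, there exists
$$P=\begin{bmatrix}p&s\\0&p\end{bmatrix}\in D_2(R)^{qnil}$$
with $QA=AP$.

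Computing both sides and equating entries gives $qa=ap$ together with the auxiliary equation $as=0$. This is where the domain hypothesis on $R$ becomes essential: from $as=0$ we obtain $a=0$ or $s=0$. If $a=0$, then $qa=0=a\cdot 0$ and we are done (taking $0\in R^{qnil}$). If $a\ne 0$, then $s=0$, so $P=\mathrm{diag}(p,p)$. At this point the corner entry $b$ in Lemma \ref{nil}(3) is $0$, and $0$ trivially lies in $\mathrm{comm}^2(p)$, so that lemma applies and yields $p\in R^{qnil}$. Combined with $qa=ap$, this shows $qa\in aR^{qnil}$, establishing that $R$ is right qnil-duo.

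The main (and essentially only) subtle point is the step $as=0\Rightarrow s=0$ when $a\ne 0$: without the domain assumption one could not force the off-diagonal entry of $P$ to vanish, and hence could not invoke Lemma \ref{nil}(3) to pull a quasinilpotent out of $R$. Everything else is bookkeeping: diagonal embedding preserves the quasinilpotent property in one direction by Lemma \ref{nil}(2) and detects it in the other direction by Lemma \ref{nil}(3), so the two lemmas together convert the matricial statement into the scalar statement. The left case runs identically, using $AQ=PA$ and the equation $sa=0$.
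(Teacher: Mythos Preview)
Your proof is correct and follows essentially the same approach as the paper: embed $a$ and $q$ as scalar matrices in $D_2(R)$, use Lemma~\ref{nil}(2) to place $Q$ in $D_2(R)^{qnil}$, apply the right qnil-duo hypothesis to obtain $P$, use the domain assumption to kill the off-diagonal entry, and then invoke Lemma~\ref{nil}(3) to conclude $p\in R^{qnil}$. Your write-up is slightly more careful in explicitly handling the case $a=0$ and noting why Lemma~\ref{nil}(3) applies, but the argument is otherwise the same.
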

\begin{proof} Assume that $D_2(R)$ is right qnil-duo. Let $a\in R$ and $b\in R^{qnil}$. Consider
$A = \begin{bmatrix}a&0\\0&a\end{bmatrix} \neq 0$, $B =
\begin{bmatrix}b&0\\0&b\end{bmatrix} \neq 0$. Let $X =
\begin{bmatrix}x&y\\0&x\end{bmatrix}\in $ comm$(B)$. Then $I_2 - BX$
is invertible since $1 - bx$ is invertible in $R$. Hence $BA\in
D_2(R)^{qnil}A$. There exists $C =
\begin{bmatrix}c&d\\0&c\end{bmatrix}\in  D_2(R)^{qnil}$ such that
$BA = AC$. Then $ba = ac$ and $ad = 0$. By hypothesis $d = 0$. By
Lemma \ref{nil}(3), $c\in R^{qnil}$. It follows that $ba = ac\in
aR^{qnil}$. Hence $R^{qnil}a\subseteq aR^{qnil}$.
\end{proof}
\noindent Recall that a ring $R$ is said to have {\it stable range 1} if for
any $a$, $b\in R$ satisfying $aR + bR = R$, there exists $y\in R$
such that $a + by$ is right invertible (cf. \cite{Va}). In
\cite{Ni}, a ring $R$ is called {\it exchange} if for any $x\in
R$,  there exists $e\in$ Id$(R)$ such that $e\in Rx$ and $1 - e\in
R(1 - x)$, and it is proved that for an abelian ring $R$, $R$ is
exchange if and only if it is clean, and $R$ is exchange if and
only if idempotents lift modulo every left (or right) ideal.
\begin{thm} The following hold. \begin{enumerate} \item[(1)] Right (resp., left) qnil-duo exchange rings have stable range 1.
\item[(2)] Right (resp., left) qnil-duo regular rings (in the sense of von Neumann) are strongly regular.
\end{enumerate}
\end{thm}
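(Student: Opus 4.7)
My plan is to reduce both parts to the abelian property: by Theorem~\ref{abel}(2), every right (resp., left) qnil-duo ring $R$ is abelian. With abelianness in hand, (1) follows from a classical theorem and (2) is a short direct computation.

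For part (1), I would invoke the well-known result of H.-P.~Yu (\emph{Stable range one for exchange rings}, 1995) that every abelian exchange ring has stable range one. Since the hypothesis gives $R$ exchange and Theorem~\ref{abel}(2) gives $R$ abelian, Yu's theorem applies immediately and produces the claim. If a self-contained argument is preferred over a citation, the same conclusion can be obtained by the usual route: an abelian exchange ring is clean (idempotents being central, the exchange decomposition is a clean decomposition), and in a clean ring with central idempotents one verifies stable range one directly by adjusting the unit part using the idempotent.

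For part (2), let $a\in R$ and use regularity to pick $b\in R$ with $aba=a$. Setting $e=ab$, the identity $aba=a$ gives $e^{2}=abab=ab=e$, so $e$ is idempotent. By abelianness $e$ is central, so $ae=ea$, which rewrites $a=aba$ as $a=a\cdot ab=a^{2}b$. Hence $a$ is strongly regular. Running this over every $a\in R$ shows $R$ is strongly regular.

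The only non-routine ingredient is Yu's theorem in part (1); part (2) is a one-line calculation once abelianness is available. The main obstacle, if the citation is unavailable, is writing out the abelian-exchange $\Rightarrow$ stable range one step in detail, but this is standard and does not use the qnil-duo hypothesis beyond extracting abelianness.
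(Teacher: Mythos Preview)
Your proposal is correct and follows essentially the same approach as the paper: both parts rest on Theorem~\ref{abel}(2) to obtain abelianness, after which (1) is deduced from Yu's theorem \cite[Theorem 6]{Yu} and (2) from the centrality of the idempotent $ab$ yielding $a=aba=a^2b$. The paper's proof is exactly this, stated more tersely.
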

\begin{proof} (1) Let $R$ be a right qnil-duo exchange ring. By Theorem \ref{abel}, $R$ is abelian. Hence \cite[Theorem 6]{Yu}
implies $R$ has stable range 1.\\(2) Let $R$ be a qnil-duo regular ring and $a\in R$. There exists $b\in R$ such that $a = aba$. Then $ab = (ab)^2$, $ba = (ba)^2\in$ Id$(R)$.  By Theorem \ref{abel}, $ab$ is central. So $a = aba = a^2b$. Hence $R$ is strongly regular.
\end{proof}

\noindent Let $R$ be a ring. The Jacobson radical of the
polynomial ring $R[x]$ is $J(R[x]) = N[x]$ where $N = J(R[x])\cap
R$ is a nil ideal of $R$. Then $N\subseteq R^{qnil}$ and
$J(R[[x]]) = xR[[x]]$. Therefore, ${R[[x]]}^{qnil} = xR[[x]]$. One
may wonder whether or not $R[x]$ and $R[[x]]$ are qnil-duo. The
following example shows that $R[x]$ and $R[[x]]$ need not be right
qnil-duo.

\begin{ex}\label{weyson}{\rm (1) Let $F$ be a field, $R = M_n(F)$ and consider the ring $R[x]$.
Observe that $M_n(F[x])$ is not right (or left) qnil-duo for any
positive integer $n\geq 2$ by Examples \ref{mat}. It follows that
$R[x]$ is not right (or left) qnil-duo since $M_n(F)[x] \cong
M_n(F[x])$.\\
\noindent (2) Let $R = A/(ab-ba-1)$ denote the Weyl algebra discussed
in \cite[Example 1.2(2)]{KY}. Let $S = R[[x]]$. Then $S^{qnil} =
xR[[x]] = J(R[[x]])$, $R$ is a domain and $R[[x]]$ is abelian. It
is proved that $R[[x]]$ is neither right normal nor left normal on
$J(R)$. Therefore, $R[[x]]$ is neither right qnil-duo nor left
qnil-duo.}
\end{ex}
\begin{thm} Let $R$ be  an algebra over a commutative ring $S$. Consider the {\it Dorroh extension} (or {\it ideal extension}) $I(R, S)$ of $R$ by $S$. If $I(R, S)$ is right qnil-duo, then so is $R$.
\end{thm}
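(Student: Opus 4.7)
The plan is to use the embedding $R \hookrightarrow I(R,S)$ given by $r \mapsto (r,0)$, together with Proposition \ref{nil2}(1), which guarantees that $x \in R^{qnil}$ implies $(x,0) \in I(R,S)^{qnil}$. Fix $a \in R$ and $x \in R^{qnil}$. Lift to $(x,0) \in I(R,S)^{qnil}$ and $(a,0) \in I(R,S)$, then apply the right qnil-duo hypothesis on $I(R,S)$: there exists $(c,d) \in I(R,S)^{qnil}$ with $(x,0)(a,0) = (a,0)(c,d)$. Expanding via the Dorroh multiplication, the left-hand side is $(xa,0)$ and the right-hand side is $(ac + da, 0)$. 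Since $S$ lies in the center of $R$, we have $da = ad$, so $xa = a(c+d)$, where we identify $d$ with $d\cdot 1_R$ and set $y := c + d \in R$.

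Now the goal reduces to showing $y = c + d \in R^{qnil}$, which is the principal obstacle, because a surjective ring homomorphism does not in general carry quasinilpotents to quasinilpotents. The key is to exploit the specific structure of the Dorroh extension. Let $r \in R$ satisfy $r \in $ comm$(y)$; the centrality of $d \cdot 1_R$ in $R$ collapses this to $r \in $ comm$(c)$, and Lemma \ref{IR}(1) then upgrades this to $(r, 0) \in $ comm$(c,d)$ in $I(R,S)$. The quasinilpotency of $(c,d)$ forces $(0,1) + (c,d)(r,0)$ to be invertible in $I(R,S)$; a direct Dorroh computation shows this element equals $((c+d)r, 1)$.

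Finally, applying Lemma \ref{IR}(2) to an inverse $(u, v)$ of $((c+d)r, 1)$, the second-coordinate condition $1\cdot v = v\cdot 1 = 1$ forces $v = 1$, and the remaining identity collapses to $(1 + u)(1 + (c+d)r) = 1 = (1 + (c+d)r)(1 + u)$ in $R$. Thus $1 + (c+d)r \in U(R)$ for every $r \in $ comm$(c+d)$, proving $y = c + d \in R^{qnil}$. Combining with the earlier identity, $xa = ay \in aR^{qnil}$, so $R^{qnil}a \subseteq aR^{qnil}$ and $R$ is right qnil-duo.
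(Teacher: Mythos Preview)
Your proof is correct and follows essentially the same route as the paper: embed $R$ via $r\mapsto(r,0)$, lift the qnil-duo equation to $I(R,S)$, and then show $c+d\in R^{qnil}$ by reducing comm$(c+d)$ to comm$(c)$ via centrality of $S$ and invoking Lemma~\ref{IR}. The only cosmetic difference is that you cite Proposition~\ref{nil2}(1) for $(x,0)\in I(R,S)^{qnil}$, whereas the paper reproves that inclusion inline.
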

\begin{proof}  Assume that $I(R, S)$ is right qnil-duo. Let $a\in R$, $b\in R^{qnil}$. Then $(a, 0)\in I(R, S)$ and $(b, 0)\in I(R, S)^{qnil}$.
Indeed, let $(x, y)\in $ comm$(b, 0)$. By Lemma \ref{IR}, $x\in $
comm$(b)$. Since $R$ is an algebra over $S$, we have $x+y\in $
comm$(b)$. Then $1 + b(x+y)$ is invertible in $R$ with inverse
$t$. Again by Lemma \ref{IR}, $(0, 1) + (b, 0)(x, y)$ is
invertible in $I(R, S)$ with the inverse $(t-1, 1)$. Then $(b,
0)(a, 0)\in I(R, S)^{qnil}(a, 0)$. There exists $(c, s)\in I(R,
S)^{qnil}$ such that $(b, 0)(a, 0) = (a, 0)(c, s)$. So $ba = a(c +
s)$. To complete the proof we show $c + s\in R^{qnil}$. Let $x\in
$ comm$(c+s)$. Then $cx+sx=sc+xs$. Since $R$ is an algebra over
$S$, $sx=xs$, this implies $cx=xc$, and so $x\in $ comm$(c)$.
Hence $(x,0)\in $ comm$(c,s)$. Since $(c, s)\in I(R, S)^{qnil}$,
$(0,1)+(c,s)(x,0)$ is invertible in $I(R,S)$. Thus $1+(c+s)x$ is
invertible in $R$ by Lemma \ref{IR}(2). So $c + s\in R^{qnil}$.
Therefore $R$ is right qnil-duo.
\end{proof}
\begin{prop}Let $R$ be a ring and $S$ a subring of $R$. If $T[R, S]$ is right qnil-duo, then so are $R$ and $S$. The converse holds if $S^{qnil}\subseteq
R^{qnil}$.
\end{prop}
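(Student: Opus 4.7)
The plan is to use Proposition~\ref{TRS} as the bridge between $T[R,S]^{qnil}$ and the quasinilpotents of $R$ and $S$: part (2) of that proposition lets us lift quasinilpotents into $T[R,S]^{qnil}$, and part (1) lets us extract them back out.

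For the forward direction, assume $T[R,S]$ is right qnil-duo. To show $R$ is right qnil-duo, fix $a\in R$ and $b\in R^{qnil}$, and take $A=(a,0,0,\dots)$, $B=(b,0,0,\dots)$; since $0\in S^{qnil}$, Proposition~\ref{TRS}(2) gives $B\in T[R,S]^{qnil}$. The hypothesis then yields $C=(c_1,\dots,c_k,u,u,\dots)\in T[R,S]^{qnil}$ with $BA=AC$. The first coordinate is $ba=ac_1$, and Proposition~\ref{TRS}(1) forces $c_1\in R^{qnil}$, so $ba\in aR^{qnil}$. For $S$, the same pattern works with $A=(s,s,s,\dots)$ and $B=(0,t,t,t,\dots)$ where $s\in S$ and $t\in S^{qnil}$: Proposition~\ref{TRS}(2) again puts $B\in T[R,S]^{qnil}$, and the tail of the corresponding $C=(c_1,\dots,c_k,u,u,\dots)\in T[R,S]^{qnil}$ delivers $ts=su$ with $u\in S^{qnil}$ by Proposition~\ref{TRS}(1).

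For the converse, assume $R$ and $S$ are right qnil-duo and $S^{qnil}\subseteq R^{qnil}$. Take arbitrary $A=(a_1,\dots,a_n,s,s,\dots)\in T[R,S]$ and $B=(b_1,\dots,b_m,t,t,\dots)\in T[R,S]^{qnil}$. Pad to a common initial length $N=\max(m,n)$ by setting $a_i=s$ for $i>n$ and $b_i=t$ for $i>m$; then by Proposition~\ref{TRS}(1), every $b_i\in R^{qnil}$ (noting $t\in S^{qnil}\subseteq R^{qnil}$) and $t\in S^{qnil}$. Since $R$ is right qnil-duo, for each $i\le N$ pick $c_i\in R^{qnil}$ with $b_ia_i=a_ic_i$; since $S$ is right qnil-duo, pick $u\in S^{qnil}$ with $ts=su$. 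Setting $C=(c_1,\dots,c_N,u,u,\dots)$, by construction $AC=BA$.

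The remaining task---and the main obstacle---is to verify $C\in T[R,S]^{qnil}$. This requires a straightforward extension of the proof of Proposition~\ref{TRS}(2) from one initial entry to $N$ of them. For any $B'=(b'_1,\dots,b'_l,v,v,\dots)\in\mathrm{comm}(C)$, componentwise commutation gives $b'_i\in\mathrm{comm}_R(c_i)$ at each initial position of $C$ and $v\in\mathrm{comm}_S(u)$ at the tail, so invertibility of $1+CB'$ reduces to $1+c_ib'_i\in U(R)$ (from $c_i\in R^{qnil}$) and $1+uv\in U(S)$ (from $u\in S^{qnil}$). The subtle case is indices $i$ with $N<i\le l$, where the $C$-entry is the tail constant $u$ while $b'_i$ lives in $R$; one needs $1+ub'_i\in U(R)$, and this is precisely where $S^{qnil}\subseteq R^{qnil}$ is used, promoting $u$ from $S^{qnil}$ to $R^{qnil}$ so that $b'_i\in\mathrm{comm}_R(u)$ forces $1+ub'_i\in U(R)$. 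With $C\in T[R,S]^{qnil}$ established, $BA=AC\in AT[R,S]^{qnil}$, completing the proof.
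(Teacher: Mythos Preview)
Your proof is correct and follows essentially the same approach as the paper: both directions use Proposition~\ref{TRS} to pass between $T[R,S]^{qnil}$ and the quasinilpotents of $R$ and $S$, with the converse handled by building $C$ coordinatewise. Your padding to $N=\max(m,n)$ merely merges the paper's two cases into one, and your explicit verification that $C\in T[R,S]^{qnil}$ (rather than a bare citation of Proposition~\ref{TRS}(2), which is stated only for a single initial entry) is a welcome tightening of the same argument.
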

\begin{proof} Assume that $T[R, S]$ is right qnil-duo. Let $a\in R$, $b\in R^{qnil}$.
Let $A = (a, 0, 0, 0, \dots)$, $B = (b, 0, 0, 0, \dots)\in
T[R,S]$. By Proposition \ref{TRS}, $B\in T[R,S]^{qnil}$. By
supposition there exists $C = (c_1,c_2,\dotsb,c_m,t,t,\dots.)\in
T[R,S]^{qnil}$ such that $BA = AC$. Hence $ba = ac_1$. By
Proposition \ref{TRS}, $c_1\in R^{qnil}$. Similarly, let $s\in S$,
$t\in S^{qnil}$ and $C =(0, s, s, s, s, \dots,s ,\dots)$, $D = (0,
t, t, t, \dots.)\in T[R,S]$. By Proposition \ref{TRS}, $D\in
T[R,S]^{qnil}$. There exists $D' = (d_1, d_2, d_3, \dots., d_l, u,
u, u, \dots)\in T[R, S]^{qnil}$ such that $DC = CD'$. By
Proposition \ref{TRS}, $u\in S^{qnil}$ and $ts = su\in sS^{qnil}$.

Suppose that $R$ and $S$ are right qnil-duo and $S^{qnil}\subseteq
R^{qnil}$. Let $A\in T[R, S]$, $B\in T[R, S]^{qnil}$ where $A = (
a_1, a_2,\dotsb, a_n, s, s,\dotsb)$, $B = (b_1,b_2,\dots,b_m, t,
t, \dots)$, we prove $BA = AC$ for some $C\in T[R, S]^{qnil}$. By
Proposition \ref{TRS}, $b_i\in R^{qnil}$ for $i = 1, 2, \dots, m$
and $t\in S^{qnil}$.
By supposition $b_i\in R^{qnil}$ implies $b_ia_i = a_ic_i$ for some $c_i\in R^{qnil}$. We divide the proof in some cases:\\
Case I. $n\leq m$. Then $b_ia_i\in R^{qnil}a_i$. Since $R$ is
right qnil-duo, there exist $c_i\in R^{qnil}$ such that $b_ia_i =
a_ic_i$ for each $1\leq i\leq n$. For $n+1\leq i\leq m$, $b_is\in
R^{qnil}s$. There exist $c_i\in R^{qnil}$ such that $b_is = sc_i$.
For  $ts\in S^{qnil}s$, there exists $l\in S^{qnil}$ such that $ts
= sl\in sS^{qnil}$. Let $C = (c_1, c_2, c_3, \dots, c_m, l, l, l,
\dots)$.
By Proposition \ref{TRS}(2), $C\in T[R,S]^{qnil}$. Then $BA = AC\in A T[R,S]^{qnil}$.  \\
Case II. $n > m$. Let $1\leq i\leq m$. Then $b_ia_i\in R^{qnil}a_i$ and since $R$ is right qnil-duo, there exist $c_i\in R^{qnil}$ such that $b_ia_i = a_ic_i$.
For $m+1\leq i\leq n$, $ta_i\in S^{qnil}a_i$.
By $S^{qnil}\subseteq R^{qnil}$,  we have $ta_i = a_ic_i \in a_iR^{qnil}$ for some $c_i\in R^{qnil}$. For $ts\in S^{qnil}s$, by supposition there exists $l\in S^{qnil}$
such that $ts = sl\in sS^{qnil}$. Let $C = (c_1, c_2, c_3, \dots, c_n, l, l, l, \dots)$. By Proposition \ref{TRS}(2), $C\in T[R, S]^{qnil}$. Then $BA = AC$. Hence $T[R,S]^{qnil}A\subseteq AT[R,S]^{qnil}$. It completes the proof.
\end{proof}
\begin{thm} \begin{enumerate}
\item[(1)] Let $H(R; \alpha)$ be a Skew Hurwitz series ring over a
ring $R$. If $H(R; \alpha)$ is right qnil-duo, then $R$ is right
qnil-duo. \item[(2)] Let $R[[x; \alpha]]$ be a skew formal power
series ring over a ring $R$. If $R[[x; \alpha]]$ is right
qnil-duo,  then $R$ is right qnil-duo.
\end{enumerate}
\end{thm}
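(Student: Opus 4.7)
The plan is to exploit the ring epimorphism $\epsilon : H(R;\alpha) \to R$ (and its counterpart $\epsilon : R[[x;\alpha]] \to R$) together with the identification $H(R;\alpha)^{qnil} = \epsilon^{-1}(R^{qnil})$ established in Proposition \ref{HR}. Since $R$ embeds into $H(R;\alpha)$ (and into $R[[x;\alpha]]$) as the subring of constant series, every element of $R$ can simultaneously be viewed as an element of the series ring, and this lets one transfer the qnil-duo property downward through $\epsilon$.

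For part (1), I would fix $a \in R$ and $b \in R^{qnil}$ and view both as constant series in $H(R;\alpha)$. Because $\epsilon(b) = b \in R^{qnil}$, Proposition \ref{HR}(1) forces the constant series $b$ to lie in $H(R;\alpha)^{qnil}$. The assumption that $H(R;\alpha)$ is right qnil-duo then produces some $g(x) \in H(R;\alpha)^{qnil}$ with $b \cdot a = a \cdot g(x)$ inside $H(R;\alpha)$. I would then apply $\epsilon$ to both sides: the left side collapses to $ba$, and the right side becomes $a\,\epsilon(g(x))$. Since $g(x) \in \epsilon^{-1}(R^{qnil})$, the element $\epsilon(g(x))$ belongs to $R^{qnil}$, whence $ba \in aR^{qnil}$, so $R^{qnil}a \subseteq aR^{qnil}$ for every $a \in R$, as required.

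Part (2) would proceed by exactly the same argument, with $R[[x;\alpha]]$ playing the role of $H(R;\alpha)$ and Proposition \ref{HR}(2) replacing Proposition \ref{HR}(1); the homomorphism $\epsilon$ in the formal power series setting enjoys the same surjectivity and the same characterization of quasinilpotents, so no modifications to the argument are needed.

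I expect no substantive obstacle here. The only step that deserves a mention is verifying that a constant series $b \in R$ really belongs to the set of quasinilpotents of the series ring, but this is immediate from Proposition \ref{HR} because $\epsilon$ of a constant series returns the element itself. The heavy lifting has already been done in that proposition, and the present statement is effectively a transfer of the qnil-duo property through the quotient by $\ker(\epsilon) = xH(R;\alpha)$ (respectively $xR[[x;\alpha]]$).
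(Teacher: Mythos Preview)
Your proposal is correct and follows essentially the same route as the paper: lift $a$ and $b$ to (constant) series, use Proposition~\ref{HR} to identify which series are quasinilpotent, apply the right qnil-duo hypothesis in the series ring to obtain $ba = a\cdot g(x)$, and then push back down via $\epsilon$. The paper's only cosmetic difference is that it phrases the lift as ``there exist $f(x),g(x)$ with $\epsilon(f(x))=a$, $\epsilon(g(x))=b$'' rather than explicitly choosing constant series, and it swaps the roles of the letters $a$ and $b$.
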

\begin{proof} Suppose that $H(R; \alpha)$ is a right
qnil-duo ring. Let $a\in R^{qnil}$ and $b\in R$. By the definition
of $\epsilon$ and Proposition \ref{HR}, there exist $f(x)$,
$g(x)\in H(R; \alpha)$ with $f(x)\in H(R; \alpha)^{qnil}$ and
$\epsilon(f(x)) = a$, $\epsilon(g(x)) = b$. There exists $h(x) =
c_0 + c_1x + c_2x^2 + \dots\in H(R; \alpha)^{qnil}$ such that
$f(x)g(x) = g(x)h(x)$. Hence $\epsilon(f(x)g(x)) =
\epsilon(g(x)h(x))$ implies
$ab = bc_0\in bR^{qnil}$. Thus $R^{qnil}b \subseteq bR^{qnil}$.\\
(2) Similar to that of (1).
\end{proof}
\section{Some subrings of matrix rings}
\noindent Besides, for any ring $R$ and any positive integer
$n\geq 2$, $M_n(R)$ is not right (or left) qnil-duo, in this
section, quasinilpotent elements of some subrings of full matrix
rings are determined for the purpose of the use whether or not
their subrings to be right (or left) qnil-duo.

{\bf The rings $L_{(s,t)}(R)$:} Let $R$ be a ring and $s$, $t\in
C(R)$.\\ Let $L_{(s,t)}(R) = \left
\{\begin{bmatrix}a&0&0\\sc&d&te\\0&0&f\end{bmatrix}\in M_3(R)\mid
a, c, d, e, f\in R\right \}$, where the operations are defined as
those in $M_3(R)$. Then $L_{(s,t)}(R)$ is a subring of $M_3(R)$.
\begin{lem}\label{inv1} Let $A = \begin{bmatrix}a&0&0\\sc&d&te\\0&0&f\end{bmatrix}\in L_{(s,t)}(R)$. Then the following hold.
\begin{enumerate}\item[(1)] $A$ is invertible in  $L_{(s,t)}(R)$ if and only if $a$, $d$ and $f$ are invertible in $R$.
\item[(2)] If  $a$, $d$, $f\in R^{qnil}$, then $A\in L_{(s,t)}(R)^{qnil}$.
\end{enumerate}
\end{lem}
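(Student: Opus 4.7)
For part (1), I will proceed by direct computation within $L_{(s,t)}(R)$. For the forward direction, suppose $A$ has a two-sided inverse $B = \begin{bmatrix}a'&0&0\\sc'&d'&te'\\0&0&f'\end{bmatrix}\in L_{(s,t)}(R)$. Writing out the products $AB$ and $BA$ and comparing with $I_3$, the diagonal positions $(1,1)$, $(2,2)$, $(3,3)$ immediately yield $aa'=a'a=1$, $dd'=d'd=1$ and $ff'=f'f=1$, so $a$, $d$, $f\in U(R)$. For the converse, given $a^{-1},d^{-1},f^{-1}\in R$, I will construct a candidate inverse by setting $a'=a^{-1}$, $d'=d^{-1}$, $f'=f^{-1}$, together with
\[
c'=-d^{-1}ca^{-1},\qquad e'=-d^{-1}ef^{-1}.
\]
Using that $s,t\in C(R)$, the off-diagonal entries of $AB$ and $BA$ reduce to $s(ca^{-1}+dc')=0$ and $t(de'+ef^{-1})=0$ (respectively the symmetric versions from $BA$), which hold by the choices above. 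Thus $B\in L_{(s,t)}(R)$ and $AB=BA=I_3$.

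For part (2), I will mimic the strategy used for the ring $D_2(R)$ in Lemma~\ref{nil}(2) and reduce to part (1). Let $a,d,f\in R^{qnil}$ and pick any $B=\begin{bmatrix}x&0&0\\sy&z&tw\\0&0&v\end{bmatrix}\in\mathrm{comm}(A)$. Computing $AB$ and $BA$ and comparing diagonals gives $x\in\mathrm{comm}(a)$, $z\in\mathrm{comm}(d)$, $v\in\mathrm{comm}(f)$. Since $a,d,f\in R^{qnil}$, the three diagonal entries of $I_3+AB$, namely $1+ax$, $1+dz$, $1+fv$, are therefore invertible in $R$. Moreover $I_3+AB$ still lies in $L_{(s,t)}(R)$ because its $(2,1)$ and $(2,3)$ entries remain multiples of $s$ and $t$ respectively (the $s,t$ factor out by centrality). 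Applying part (1) to $I_3+AB$ shows it is invertible in $L_{(s,t)}(R)$, which is precisely the quasinilpotency condition for $A$.

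The only real bookkeeping challenge is (1): one must verify, without cancelling $s$ or $t$ (which are not assumed regular), that the off-diagonal conditions $s(ca^{-1}+dc')=0$ and $t(de'+ef^{-1})=0$ hold for the chosen $c',e'$, and this is exactly where the centrality of $s$ and $t$ is used to commute them past $d^{-1}$ and $d$ when manipulating the candidate inverse. After that, part (2) is essentially a transfer through (1), since the membership of $I_3+AB$ in $L_{(s,t)}(R)$ is automatic and the three diagonal units are supplied directly by the definition of $R^{qnil}$.
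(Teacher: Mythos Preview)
Your proposal is correct and follows essentially the same approach as the paper: for (1) you construct the explicit inverse with $c'=-d^{-1}ca^{-1}$, $e'=-d^{-1}ef^{-1}$ (the paper writes these as $y=-zcx$, $u=-zev$ with $x=a^{-1}$, $z=d^{-1}$, $v=f^{-1}$), and for (2) you read off $x\in\mathrm{comm}(a)$, $z\in\mathrm{comm}(d)$, $v\in\mathrm{comm}(f)$ from $AB=BA$ and then apply (1) to $I_3+AB$, exactly as the paper does. Your additional remarks about the centrality of $s,t$ and the membership of $I_3+AB$ in $L_{(s,t)}(R)$ make explicit what the paper leaves implicit, but there is no substantive difference in method.
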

\begin{proof} (1) One way is clear. Let $A = \begin{bmatrix}a&0&0\\sc&d&te\\0&0&f\end{bmatrix}\in L_{(s, t)}(R)$.
Assume that $a$, $d$ and $f$ are invertible with $ax = xa = 1$,
$dz = zd =1$ and $fv = vf = 1$ where $x, z, v\in R$. Consider
$B = \begin{bmatrix}x&0&0\\sy&z&tu\\0&0&v\end{bmatrix}\in L_{(s, t)}(R)$ where $y = -zcx$ and $u = -zev$. Then $AB = BA = I_3$.\\
(2)  Assume that $a$, $d$, $f\in R^{qnil}$. We prove that $A\in
L_{(s, t)}(R)^{qnil}$. Let $B =
\begin{bmatrix}x&0&0\\sy&z&tu\\0&0&v\end{bmatrix}\in L_{(s,
t)}(R)$ with $B\in $ comm$(A)$. It is easily checked that $x\in $
comm$(a)$, $z\in $ comm$(d)$, $v\in $ comm$(f)$. Then $1 + ax$, $1
+ dz$, $1 + fv$ are invertible in $R$. By (1), $I_3 + AB =
\begin{bmatrix}1 + ax&0&0\\scx + sdy&1 + dz&tdu + tev\\0&0&1 +
fv\end{bmatrix}$ is invertible. So $A\in L_{(s, t)}(R)^{qnil}$.
\end{proof}
\begin{lem}\label{final} Let $A = \begin{bmatrix}a&0&0\\sc&d&te\\0&0&f\end{bmatrix}\in L_{(s,t)}(R)$. Then the following hold.
\begin{enumerate}
    \item[(1)] If $A\in L_{(0,t)}(R)^{qnil}$, then  $a\in R^{qnil}$.
    \item[(2)] If $A\in L_{(s,0)}(R)^{qnil}$, then  $f\in R^{qnil}$.
    \item[(3)] $A\in L_{(0,0)}(R)^{qnil}$ if and only if  $a,d,f\in R^{qnil}$.
\end{enumerate}
\end{lem}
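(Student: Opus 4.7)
The strategy is to probe the quasinilpotency of $A$ using extremely sparse elements of $\mathrm{comm}(A)$, each designed to isolate a single diagonal entry. In every case, the invertibility of $I_{3}+AB$ in the ambient subring, combined with the invertibility criterion of Lemma~\ref{inv1}(1), will translate directly into the invertibility in $R$ of $1$ plus the relevant diagonal product, which is exactly what quasinilpotency of that entry requires.

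For (1), fix an arbitrary $x\in\mathrm{comm}(a)$ and set $B=\begin{bmatrix}x&0&0\\0&0&0\\0&0&0\end{bmatrix}\in L_{(0,t)}(R)$ (recall $s=0$). A one-line multiplication gives $AB=BA=\begin{bmatrix}ax&0&0\\0&0&0\\0&0&0\end{bmatrix}$, so $B\in\mathrm{comm}(A)$. By hypothesis, $I_{3}+AB$ is invertible in $L_{(0,t)}(R)$, but this matrix is diagonal with entries $1+ax,\,1,\,1$, so Lemma~\ref{inv1}(1) forces $1+ax\in U(R)$. Hence $a\in R^{qnil}$. Part (2) is entirely parallel: with $t=0$, for any $v\in\mathrm{comm}(f)$ one chooses $B$ supported only at position $(3,3)$ with entry $v$, verifies $AB=BA$ lands in the $(3,3)$-slot only, and reads off invertibility of $1+fv$ from $I_{3}+AB=\mathrm{diag}(1,1,1+fv)$ via Lemma~\ref{inv1}(1).

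For (3), the ``if'' direction is Lemma~\ref{inv1}(2). For the converse, $s=t=0$ reduces $A$ to $\mathrm{diag}(a,d,f)$, and the three diagonal entries must be handled one at a time. The arguments of (1) and (2) apply word-for-word inside $L_{(0,0)}(R)$ and yield $a,f\in R^{qnil}$. For the middle entry, given $z\in\mathrm{comm}(d)$, take $B=\begin{bmatrix}0&0&0\\0&z&0\\0&0&0\end{bmatrix}\in L_{(0,0)}(R)$, verify $AB=BA=\mathrm{diag}(0,dz,0)$, and apply Lemma~\ref{inv1}(1) to $I_{3}+AB=\mathrm{diag}(1,1+dz,1)$ to conclude $1+dz\in U(R)$, so $d\in R^{qnil}$.

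The only point requiring any care is checking that each test matrix $B$ actually sits in the correct subring $L_{(0,t)}(R)$, $L_{(s,0)}(R)$, or $L_{(0,0)}(R)$ and genuinely commutes with $A$; these are transparent entry-by-entry computations, and the structural content has already been absorbed into Lemma~\ref{inv1}(1). There is therefore no substantive obstacle beyond choosing the three isolating witnesses correctly.
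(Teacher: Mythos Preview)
Your proof is correct and follows essentially the same approach as the paper's: both isolate each diagonal entry by choosing a sparse commuting matrix $B$ supported in a single diagonal slot, then apply Lemma~\ref{inv1}(1) to read off invertibility of $1+ax$, $1+fv$, or $1+dz$. The only difference is cosmetic---you spell out the middle-entry argument in (3) explicitly, whereas the paper dismisses it as ``similar discussion.''
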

\begin{proof} (1) Let
$A=\begin{bmatrix}a&0&0\\0&d&te\\0&0&f\end{bmatrix}\in
L_{(0,t)}(R)^{qnil}$ and $x\in $ comm$(a)$. Consider
$B=\begin{bmatrix}x&0&0\\0&0&0\\0&0&0\end{bmatrix}\in
L_{(0,t)}(R)$. Then $B\in $ comm$(A)$. Since $A\in
L_{(0,t)}(R)^{qnil}$, $I+AB$ is invertible in $L_{(0,t)}(R)$. By
Lemma \ref{inv1}(1), $1+ax\in U(R)$. Therefore $a\in R^{qnil}$.\\
(2) Similar to the proof of (1).\\
(3) The sufficiency follows from Lemma \ref{inv1}(2). For the
necessity, $a,f\in R^{qnil}$ by (1) and (2), respectively. Also,
by the similar discussion in (1), we obtain $d\in R^{qnil}$.
\end{proof}
\begin{thm}\label{guz} Let $R$ be a ring. If $L_{(0, t)}(R)$ is right qnil-duo, then $R$ is a right qnil-duo ring.
\end{thm}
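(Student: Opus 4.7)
The plan is to embed $R$ into $L_{(0,t)}(R)$ through the $(1,1)$-corner, transfer the right qnil-duo property to the corner, and extract the required element of $R^{qnil}$ using the structural lemmas already proved.

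Fix $a \in R$ and $b \in R^{qnil}$; the goal is to produce some $c \in R^{qnil}$ with $ba = ac$. I would define
$$A = \begin{bmatrix}a&0&0\\0&0&0\\0&0&0\end{bmatrix}, \qquad B = \begin{bmatrix}b&0&0\\0&0&0\\0&0&0\end{bmatrix}$$
in $L_{(0,t)}(R)$. Since $0 \in R^{qnil}$ and $b \in R^{qnil}$, Lemma \ref{inv1}(2) (applied with $s=0$) immediately gives $B \in L_{(0,t)}(R)^{qnil}$.

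Now invoke the hypothesis that $L_{(0,t)}(R)$ is right qnil-duo to find
$$C = \begin{bmatrix}c&0&0\\0&d'&te'\\0&0&f'\end{bmatrix} \in L_{(0,t)}(R)^{qnil}$$
with $BA = AC$. Direct computation of both products shows $BA$ has unique nonzero entry $ba$ at position $(1,1)$, and the $(1,1)$-entry of $AC$ is $ac$; hence $ba = ac$. Finally, Lemma \ref{final}(1) says that the $(1,1)$-entry of any element of $L_{(0,t)}(R)^{qnil}$ lies in $R^{qnil}$, so $c \in R^{qnil}$ and $ba = ac \in aR^{qnil}$. This shows $R^{qnil}a \subseteq aR^{qnil}$ for every $a \in R$, completing the proof.

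There is essentially no obstacle once Lemmas \ref{inv1} and \ref{final} are in hand; the only point requiring a moment of care is making sure the ancillary entries of $C$ produced by the hypothesis do not interfere, which is automatic because we have chosen $A$ supported only in the $(1,1)$-position, so $AC$ only constrains the $(1,1)$-entry of $C$.
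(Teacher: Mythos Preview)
Your proof is correct and follows essentially the same approach as the paper: embed $a$ and $b$ in the $(1,1)$-corner of $L_{(0,t)}(R)$, use Lemma~\ref{inv1}(2) to place $B$ in $L_{(0,t)}(R)^{qnil}$, apply the right qnil-duo hypothesis, and extract $c\in R^{qnil}$ via Lemma~\ref{final}(1). The matrices, lemmas, and logical flow match the paper's argument exactly.
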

\begin{proof} Assume that $L_{(0, t)}(R)$ is right qnil-duo and let $a\in R$ and $ b\in R^{qnil}$. Consider $A =
\begin{bmatrix}a&0&0\\0&0&0\\0&0&0\end{bmatrix}$, $B = \begin{bmatrix}b&0&0\\0&0&0\\0&0&0\end{bmatrix}\in L_{(0,t)}(R)$.
By Lemma \ref{inv1}, $B\in  L_{(0,t)}(R)^{qnil}$. By supposition there exists $B' = \begin{bmatrix}x&0&0\\0&z&tu\\0&0&v\end{bmatrix}\in
L_{(0,t)}(R)^{qnil}$ such that $BA = AB'$. It implies $ba = ax$. By Lemma \ref{final}(1), $x\in R^{qnil}$. Hence $ba = ax\in
aR^{qnil}$. Thus $R^{qnil}a\subseteq aR^{qnil}$.
\end{proof}
\noindent There are  right qnil-duo  rings $R$ such that the rings  $L_{(s, t)}(R)$ need not be right qnil-duo as shown below.
\begin{ex}{\rm The ring $L_{(1,1)}(\Bbb Z_4)$ is not  right qnil-duo.
}\end{ex}
\begin{proof} Let $A = \begin{bmatrix}0&0&0\\1&2&1\\0&0&3\end{bmatrix}\in L_{(1, 1)}(\Bbb Z_4)$ and $B = \begin{bmatrix}2&0&0\\1&2&3\\0&0&2\end{bmatrix}\in L_{(1, 1)}(\Bbb Z_4)^{qnil}$. Assume that there exists $C =
\begin{bmatrix}x&0&0\\y&z&v\\0&0&u\end{bmatrix}\in L_{(1, 1)}(\Bbb Z_4)^{qnil}$ such that $BA = AC$. Then $BA =
\begin{bmatrix}0&0&0\\2&0&3\\0&0&2\end{bmatrix}$ and $AC = \begin{bmatrix}0&0&0\\x+2y&2z&2v+u\\0&0&3u\end{bmatrix}$. $BA =
AC$ implies $ 3 = 2v +u$ and $2=3u$. These equations lead us a contradiction. Hence $L_{(1,1)}(\Bbb Z_4)$ is not right qnil-duo.
\end{proof}
%However, there are right qnil-duo subrings  $L_{(s, t)}(D_3(R))$
%of  $L_{(s, t)}(R)$. Consider the subring $L_{(s, t)}(D_3(R)) =
%\left \{\begin{bmatrix}a&0&0\\nsa&a&nta\\0&0&a\end{bmatrix}\in
%M_3(R)\mid a\in R, n\in \Bbb Z\right \}$ of $L_{(s, t)}(R)$.
%\begin{thm}\label{guzel} Let $R$ be a ring. Then $R$ is a right qnil-duo ring if and only if so is  $L_{(s, t)}(D_3(R))$.
%\end{thm}
%\begin{proof} Assume that $R$ is a right qnil-duo ring and let $A = \begin{bmatrix}a&0&0\\nsa&a&nta\\0&0&a\end{bmatrix}\in L_{(s, t)}(D_3(R))$ and $B =
%\begin{bmatrix}x&0&0\\msx&x&mtx\\0&0&x\end{bmatrix}\in L_{(s, t)}(D_3(R))^{qnil}$. By Lemma \ref{inv1}, there exists $x'\in R^{qnil}$ such that $xa = ax'$. Again, by Lemma %\ref{inv1}, $B' =
%\begin{bmatrix}x'&0&0\\msx'&x'&mtx'\\0&0&x'\end{bmatrix}\in L_{(s, t)}(D_3(R))^{qnil}$. It is easily checked that $BA = AB'$ .  Converse is clear by the proof of Theorem %\ref{guz}.
%\end{proof}
{\bf The rings $H_{(s,t)}(R)$:} Let $R$ be a ring and  $s$, $t\in C(R)$ be invertible in $R$. Let\begin{center} $H_{(s,t)}(R) = \left \{\begin{bmatrix}a&0&0\\c&d&e\\0&0&f
\end{bmatrix}\in M_3(R)\mid a, c, d, e, f\in R, a - d = sc, d - f = te\right \}$.\end{center} Then $H_{(s,t)}(R)$ is a subring of $M_3(R)$.
\begin{lem}\label{son1} Let  $A = \begin{bmatrix}a&0&0\\c&d&e\\0&0&f\end{bmatrix}$, $B = \begin{bmatrix}x&0&0\\y&z&u\\0&0&v\end{bmatrix}\in H_{(s,t)}(R)$. Then\begin{enumerate}\item[(1)] $AB = BA$ if and only if $ax = xa$, $dz = zd$, $fv = vf$.\item[(2)] $A$ is invertible with inverse $B$ if and only if $ax = xa = 1$, $dz = zd = 1$, $fv = vf = 1$.\item[(3)] $A\in H_{(s,t)}(R)^{qnil}$ if and only if $a$, $d$, $f\in R^{qnil}$.
\end{enumerate}
\end{lem}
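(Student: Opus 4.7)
The plan is to handle the three parts in order, since (2) follows directly from (1) and (3) builds on both. For (1), I would multiply out $AB$ and $BA$ directly. The diagonal entries immediately yield $ax = xa$, $dz = zd$, $fv = vf$. The nontrivial entries are at positions $(2,1)$ and $(2,3)$, namely $cx + dy$ versus $ya + zc$, and $du + ev$ versus $ze + uf$. Using the defining relations $c = s^{-1}(a-d)$, $y = s^{-1}(x-z)$, $e = t^{-1}(d-f)$, $u = t^{-1}(z-v)$ (which make sense because $s$ and $t$ are central invertible), both sides of the first comparison simplify to $s^{-1}(ax - dz)$ and both sides of the second simplify to $t^{-1}(dz - fv)$. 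So the off-diagonal entries automatically match once the three diagonal commutation relations hold.

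For (2), I would note that if $AB = I$, then in particular $AB = BA$, so (1) gives the stated equalities. Conversely, if $ax = xa = 1$, $dz = zd = 1$, $fv = vf = 1$, then (1) gives $AB = BA$, and the off-diagonal entries in $AB$ become $s^{-1}(ax - dz) = s^{-1}(1 - 1) = 0$ and $t^{-1}(dz - fv) = 0$, so $AB = I$.

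For the ($\Leftarrow$) direction of (3), suppose $a, d, f \in R^{qnil}$ and take any $B \in \operatorname{comm}(A)$. By (1) the diagonal parts $x, z, v$ lie in $\operatorname{comm}(a), \operatorname{comm}(d), \operatorname{comm}(f)$, so $1 + ax$, $1 + dz$, $1 + fv$ are units. Then $I + AB$ still lies in $H_{(s,t)}(R)$ (the structure constraints for $I + AB$ reduce to the same $s^{-1}(ax - dz)$ and $t^{-1}(dz - fv)$ identities used in (1)), and (2) makes it invertible.

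For the ($\Rightarrow$) direction, the key is to produce, for each of $a$, $d$, $f$ separately, a test element of $H_{(s,t)}(R)$ that commutes with $A$ and isolates the desired quasinilpotent condition. For $a$, given $x \in \operatorname{comm}(a)$ I would use $B_1 = \bigl[\begin{smallmatrix}x&0&0\\ s^{-1}x&0&0\\ 0&0&0\end{smallmatrix}\bigr]$; one checks $B_1 \in H_{(s,t)}(R)$ and $B_1 \in \operatorname{comm}(A)$ from (1), and a short computation gives $I + AB_1 = \bigl[\begin{smallmatrix}1+ax&0&0\\ s^{-1}ax&1&0\\ 0&0&1\end{smallmatrix}\bigr]$, which by (2) is invertible iff $1 + ax \in U(R)$. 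For $f$, the symmetric choice $B_3 = \bigl[\begin{smallmatrix}0&0&0\\ 0&0&-t^{-1}v\\ 0&0&v\end{smallmatrix}\bigr]$ for $v \in \operatorname{comm}(f)$ yields invertibility of $1 + fv$. For $d$, the intermediate choice $B_2 = \bigl[\begin{smallmatrix}0&0&0\\ -s^{-1}z&z&t^{-1}z\\ 0&0&0\end{smallmatrix}\bigr]$ for $z \in \operatorname{comm}(d)$ lies in $H_{(s,t)}(R)$, commutes with $A$ by (1), and produces an $I + AB_2$ whose invertibility by (2) reduces to invertibility of $1 + dz$. The main bookkeeping obstacle throughout is verifying that the manufactured matrices satisfy the defining relations $a - d = sc$ and $d - f = te$; this is where the centrality and invertibility of $s, t$ get used repeatedly, and the simplifications $sc + d = a$ and $d - te = f$ make the off-diagonal entries fall into the required form.
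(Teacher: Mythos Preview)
Your proof is correct and follows essentially the same route as the paper's. The only cosmetic differences are that in (2) the paper writes down the inverse explicitly (taking $y=-zcx$, $u=-zev$) rather than reading $AB=I$ off the formula $s^{-1}(ax-dz)$ from (1), and in the forward direction of (3) the paper combines your $B_1$ and $B_3$ into the single test matrix $\left[\begin{smallmatrix}x&0&0\\ s^{-1}x&0&-t^{-1}v\\ 0&0&v\end{smallmatrix}\right]$ while using exactly your $B_2$ for $d$.
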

\begin{proof} (1) The necessity is clear. For the sufficiency, suppose that $ax = xa$, $dz = zd$, $fv = vf$. The matrix $AB$ has $cx + dy$ as $(2, 1)$ entry, $du + ev$ as $(2, 3)$ entry and $BA$ has $ya + zc$ as $(2,1)$ entry, $ze + uf$ as $(2, 3)$ entry. To show $AB = BA$ it is enough to get $cx + dy = ya + zc$ and $du + ev = ze + uf$. Now $scx + sdy = ax + d(sy - x) = ax - dz = xa - za + za - dz = sya + szc$. So  $cx + dy = ya + zc$ since $s$ is invertible. Similarly, we get $du + ev = ze + uf$.\\
(2) One way is clear. Assume that $ax = xa = 1$, $dz = zd = 1$, $fv = vf = 1$. Let $B\in H_{(s,t)}(R)$ with $y = -zcx$ and $u = -zev$. Then $AB = BA = I_3$.\\
(3) Assume that $A\in H_{(s,t)}(R)^{qnil}$. Let $x\in$ comm$(a)$, $y\in$ comm$(f)$. Let $D = \begin{bmatrix}x&0&0\\s^{-1}x&0&-t^{-1}y\\0&0&y\end{bmatrix}$. Then $D\in$ comm$(A)$. In fact, $scx = (a - d)x$ and $tey = (d - f)y$. Hence $I_3 + AD$ is invertible in $H_{(s, t)}(R)$. It follows that $1 + ax$, $1 + fy\in U(R)$. So $a, f\in R^{qnil}$. As for $d\in R^{qnil}$, let $r\in$ comm$(d)$ and $D = \begin{bmatrix}0&0&0\\-s^{-1}r&r&t^{-1}r\\0&0&0\end{bmatrix}$. Then $D\in$ comm$(A)$. By assumption $I_3 + AD\in U(H_{(s, t)}(R))$. Hence $1 + dr\in U(R)$. Hence $d\in R^{qnil}$. Conversely, suppose that $a$, $d$, $f\in R^{qnil}$. Let $B\in$ comm$(A)$. Then $x\in$ comm$(a)$, $z\in$ comm$(d)$ and $v\in$ comm$(f)$. By supposition, $1 + ax$, $1 + dy$ and $1 + fv$ are invertible. By part (2), $I_3 + AB\in U(H_{(s, t)}(R))$. Hence $A\in H_{(s, t)}(R)^{qnil}$. This completes the proof.
\end{proof}
\begin{thm} Let $R$ be a ring. Then $R$ is right qnil-duo if and only if $H_{(s, t)}(R)$ is right qnil-duo.
\end{thm}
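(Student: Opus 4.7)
The plan is to prove both directions separately, using Lemma \ref{son1}(3) as the key structural fact: membership in $H_{(s,t)}(R)^{qnil}$ is completely determined by the three diagonal entries $a,d,f$ being quasinilpotent in $R$.

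For the forward direction, assuming $H_{(s,t)}(R)$ is right qnil-duo, I would take $a \in R$ and $b \in R^{qnil}$ and embed them diagonally by $A = a I_3$ and $B = b I_3$. Both lie in $H_{(s,t)}(R)$ since the off-diagonal slots are zero, trivially satisfying $a-a = s\cdot 0$ and $a-a = t\cdot 0$. By Lemma \ref{son1}(3), $B \in H_{(s,t)}(R)^{qnil}$. Applying the hypothesis yields some $C \in H_{(s,t)}(R)^{qnil}$ with $BA = AC$. Comparing the $(1,1)$ entry gives $ba = ax'$, and the same lemma forces $x' \in R^{qnil}$. Hence $ba \in aR^{qnil}$, proving $R$ is right qnil-duo.

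For the reverse (harder) direction, suppose $R$ is right qnil-duo and take arbitrary $A \in H_{(s,t)}(R)$ and $B \in H_{(s,t)}(R)^{qnil}$ with entries labelled as in Lemma \ref{son1}. By Lemma \ref{son1}(3) we have $x,z,v \in R^{qnil}$. Applying the right qnil-duo property of $R$ to the three diagonal pairs gives $x', z', v' \in R^{qnil}$ with $xa = ax'$, $zd = dz'$, $vf = fv'$. Since $s,t \in C(R)$ are invertible, I force the off-diagonal entries by setting $y' = s^{-1}(x' - z')$ and $u' = t^{-1}(z' - v')$, which automatically places the candidate $C$ into $H_{(s,t)}(R)$. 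Lemma \ref{son1}(3) also guarantees $C \in H_{(s,t)}(R)^{qnil}$.

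The heart of the argument, and the main obstacle, is verifying that the forced choices of $y'$ and $u'$ actually make the $(2,1)$ and $(2,3)$ entries of $BA$ and $AC$ agree. For the $(2,1)$ entry I would multiply the target equation $cx' + dy' = ya + zc$ through by the central invertible $s$; using $sy' = x'-z'$ and the defining relations $sc = a-d$ and $sy = x-z$, together with centrality of $s$, both sides collapse to $xa - zd$. The $(2,3)$ entry is handled symmetrically by multiplying through by $t$ and invoking $te = d-f$, $tu = z-v$, $tu' = z'-v'$. Once these identities are checked, invertibility of $s$ and $t$ allows cancellation, yielding $BA = AC$ with $C \in H_{(s,t)}(R)^{qnil}$, which completes the proof.
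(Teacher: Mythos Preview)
Your proposal is correct and follows essentially the same approach as the paper's proof: both directions use the diagonal embedding $aI_3$, $bI_3$ for $H_{(s,t)}(R)\Rightarrow R$, and for $R\Rightarrow H_{(s,t)}(R)$ both construct the witness by solving the diagonal conditions $xa=ax'$, $zd=dz'$, $vf=fv'$ via the qnil-duo hypothesis and then forcing $y'=s^{-1}(x'-z')$, $u'=t^{-1}(z'-v')$, with the $(2,1)$ and $(2,3)$ verifications done by multiplying through by the central units $s$ and $t$ and reducing both sides to $xa-zd$ and $zd-vf$ respectively.
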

\begin{proof} Assume that $R$ is a right qnil-duo ring.
Let $A = \begin{bmatrix}a&0&0\\c&d&e\\0&0&f\end{bmatrix}\in H_{(s,
t)}(R)$ and $B =
\begin{bmatrix}x&0&0\\y&z&u\\0&0&v\end{bmatrix}\in
H_{(s,t)}(R)^{qnil}$. By Lemma \ref{son1}, $x$, $z$, $v\in
R^{qnil}$. There exist $x'$, $z'$, $v'\in R^{qnil}$ such that $xa
= ax'$, $zd = dz'$, $vf = fv'$. Let $y' = s^{-1}(x' - z')$ and $u'
= t^{-1}(z' - v')$ and $B' =
\begin{bmatrix}x'&0&0\\y'&z'&u'\\0&0&v'\end{bmatrix}$. Then $B'\in
H_{(s, t)}(R)^{qnil}$. We next show that $BA = AB'$. It is enough
to see $ya + zc = cx' + dy'$ and $ze + uf = du' + ev'$. We start
with, $cx' + dy' = cx' + ds^{-1}x' - ds^{-1}z'$. Multiplying the
latter from the left by $s$ and using the fact that $s$ is
central, we have $s(cx' + dy') = scx' + dx' - dz' = (sc + d)x' -
zd = ax' - zd = xa - zd = (xa - za) + (za - zd) = sya + szc = s(ya
+ zc)$. Since $s$ is invertible, $ya + zc = cx' + dy'$. Similarly,
$du' + ev' = dt^{-1}z' -  dt^{-1}v' + ev'$. Multiplying the latter
from the left by $t$ and using the fact that $t$ is central, we
have $t(du' + ev') = dz' - dv' + tev' = zd + (te - d)v' = zd - fv'
= zd - vf = zd - zf + zf - vf = z(d - f) + (z - v)f = t(ze + uf)$.
By using invertibility of $t$, we get $du' + ev' = ze + uf$.
Conversely, suppose that $H_{(s, t)}(R)$ is a right qnil-duo ring.
Let $a\in R$ and $ b\in R^{qnil}$. Consider $A = aI_3$, $B = bI_3\in H_{(s,t)}(R)$.
By Lemma \ref{son1}, $B\in  H_{(s,t)}(R)^{qnil}$. By supposition
there exists $B' =
\begin{bmatrix}x&0&0\\y&z&u\\0&0&v\end{bmatrix}\in
H_{(s,t)}(R)^{qnil}$ such that $BA = AB'$. It implies $ba = ax$.
Again by Lemma \ref{son1}, $x\in R^{qnil}$. Hence $ba = ax\in
aR^{qnil}$. Thus $R^{qnil}a\subseteq aR^{qnil}$.
\end{proof}

\bigskip

{\bf Generalized matrix rings:} Let $R$ be a ring and $s\in U(R)$. Then  $\begin{bmatrix}
R&R\\R&R\end{bmatrix}$ becomes a ring denoted by $K_s(R)$ with
addition defined componentwise and multiplication defined in
\cite{Kr} by
$$\begin{bmatrix} a_1&x_1\\y_1&b_1\end{bmatrix}\begin{bmatrix}
a_2&x_2\\y_2&b_2\end{bmatrix} = \begin{bmatrix}a_1
a_2+sx_1y_2&a_1x_2+x_1b_2\\y_1a_2+b_1y_2&sy_1x_2+b_1b_2\end{bmatrix}.$$
In \cite{Kr}, $K_s(R)$ is called a {\it generalized matrix ring
over $R$}.
\begin{lem}\label{inv} Let $R$ be a ring. Then the following hold.
\begin{enumerate}
\item[(1)] \begin{center} $U(K_0(R)) = \left \{\begin{bmatrix}a&b\\c&d\end{bmatrix}\in K_0(R)\mid a, d\in U(R)\right\}$.\end{center}\item[(2)]\begin{center} $C(K_0(R))= \left \{
\begin{bmatrix}a&0\\0&a\end{bmatrix}\in K_0(R)\mid a\in C(R)\right\}$.
\end{center}
\end{enumerate}
\end{lem}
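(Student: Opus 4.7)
The plan is to exploit the fact that with $s = 0$ the multiplication in $K_s(R)$ defined in the paper collapses to
\[
\begin{bmatrix} a_1 & x_1 \\ y_1 & b_1\end{bmatrix}\begin{bmatrix} a_2 & x_2 \\ y_2 & b_2\end{bmatrix} = \begin{bmatrix} a_1 a_2 & a_1 x_2 + x_1 b_2 \\ y_1 a_2 + b_1 y_2 & b_1 b_2\end{bmatrix},
\]
so the $(1,1)$ and $(2,2)$ entries of a product depend only on the corresponding diagonal entries of the factors, and the off-diagonal entries depend linearly on the off-diagonal entries of the factors. Once this observation is in hand, both parts are largely routine bookkeeping.

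For (1), I would prove each containment separately. If $A = \begin{bmatrix}a&b\\c&d\end{bmatrix}$ has a two-sided inverse $A' = \begin{bmatrix}a'&b'\\c'&d'\end{bmatrix}$ in $K_0(R)$, then reading the $(1,1)$ and $(2,2)$ entries of $AA' = A'A = I_2$ yields $aa' = a'a = 1$ and $dd' = d'd = 1$, so $a, d \in U(R)$. For the converse, given $a^{-1}$ and $d^{-1}$ in $R$, I would exhibit the explicit inverse
\[
A^{-1} = \begin{bmatrix} a^{-1} & -a^{-1}bd^{-1} \\ -d^{-1}ca^{-1} & d^{-1}\end{bmatrix}
\]
and verify $AA^{-1} = A^{-1}A = I_2$ by direct calculation using the multiplication formula above; the off-diagonal entries cancel cleanly since there is no $s$-term contribution.

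For (2), let $A = \begin{bmatrix}a&b\\c&d\end{bmatrix} \in C(K_0(R))$. Commuting $A$ with the idempotent $\begin{bmatrix}1&0\\0&0\end{bmatrix}$ will force $b = c = 0$, and then commuting the resulting diagonal matrix $\begin{bmatrix}a&0\\0&d\end{bmatrix}$ with $\begin{bmatrix}0&1\\0&0\end{bmatrix}$ yields $a = d$. Finally, commuting $aI_2$ with $\begin{bmatrix}r&0\\0&0\end{bmatrix}$ for arbitrary $r \in R$ gives $ar = ra$, so $a \in C(R)$. The reverse inclusion is immediate, as $aI_2$ visibly commutes with every element via the formula when $a$ is central. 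No serious obstacle is expected throughout; the only point of care is to use the modified multiplication of $K_0(R)$ rather than the ordinary matrix product, but since $s = 0$ removes precisely the terms that couple diagonal and off-diagonal entries, all the verifications go through transparently.
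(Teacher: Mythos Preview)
Your proposal is correct and follows essentially the same approach as the paper: part (1) is identical (read off diagonal entries of $AA'=A'A=I_2$, then exhibit the same explicit inverse), and part (2) uses the same test matrices $\begin{bmatrix}1&0\\0&0\end{bmatrix}$ and $\begin{bmatrix}0&1\\0&0\end{bmatrix}$. In fact your version of (2) is slightly more complete, since you add the third test $\begin{bmatrix}r&0\\0&0\end{bmatrix}$ to verify $a\in C(R)$, a step the paper leaves implicit.
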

\begin{proof} (1) Let $A  =  \begin{bmatrix}a&b\\c&d\end{bmatrix}\in U(K_0(R))$.
There exists $B = \begin{bmatrix}x&y\\z&t\end{bmatrix}\in K_0(R)$
such that $AB = BA = I$, where $I$ is the identity matrix. Then we
have $ax = xa = 1$ and $dt = td = 1.$ So $a$ and $d$ are invertible. Conversely, let $A  =  \begin{bmatrix}a&b\\c&d\end{bmatrix}\in K_0(R)$ with $a$, $d\in U(R)$. Let $x = a^{-1}$, $t = d^{-1}$, $k = -a^{-1}bd^{-1}$ and $l = -d^{-1}ca^{-1}$. Then $B =
\begin{bmatrix}x&k\\l&t\end{bmatrix}$ is the inverse of $A$ in $K_0(F)$.\\(2) Let $A =
\begin{bmatrix}a&b\\c&d\end{bmatrix}\in C(K_0(R))$. By commuting
$A$ in turn with the matrices
$\begin{bmatrix}1&0\\0&0\end{bmatrix}$ and
$\begin{bmatrix}0&1\\0&0\end{bmatrix}$
in $K_0(R)$ we reach at $A
= \begin{bmatrix}a&0\\0&a\end{bmatrix}$. For the converse, let
$A = \begin{bmatrix}a&0\\0&a\end{bmatrix}\in K_0(R)$ where $a\in C(R)$. Then clearly, $A$ commutes with every element
of $K_0(R)$. So $A\in C(K_0(R))$.
\end{proof}
\begin{prop}\label{qnil} Let $R$ be a ring and $A = \begin{bmatrix}a&b\\c&d\end{bmatrix}\in K_0(R)$. If $a$, $d\in R^{qnil}$, then $A\in K_0(R)^{qnil}$.
\end{prop}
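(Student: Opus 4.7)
The plan is to unwind the quasinilpotency condition directly and exploit the fact that, when $s=0$, the diagonal entries of a product in $K_0(R)$ depend only on the diagonal entries of the factors.

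First I would take an arbitrary $B = \begin{bmatrix}x&y\\z&t\end{bmatrix} \in \mathrm{comm}(A)$ and compute both $AB$ and $BA$ using the $s=0$ multiplication rule from \cite{Kr}. Because the $sx_1y_2$ and $sy_1x_2$ terms vanish, the $(1,1)$ entry of $AB$ is simply $ax$ and of $BA$ is $xa$, while the $(2,2)$ entries are $dt$ and $td$ respectively. Thus the relation $AB = BA$ immediately forces $x \in \mathrm{comm}(a)$ and $t \in \mathrm{comm}(d)$ (the off-diagonal equations will hold, but I do not need to use them).

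Next I would observe that $I + AB = \begin{bmatrix}1+ax & ay+bt \\ cx+dz & 1+dt\end{bmatrix} \in K_0(R)$, whose diagonal entries are $1+ax$ and $1+dt$. By the hypothesis $a,d\in R^{qnil}$ together with $x\in\mathrm{comm}(a)$ and $t\in\mathrm{comm}(d)$, both $1+ax$ and $1+dt$ are invertible in $R$. Applying Lemma \ref{inv}(1), this is exactly the criterion for $I + AB$ to be invertible in $K_0(R)$. Since $B$ was arbitrary in $\mathrm{comm}(A)$, it follows that $A \in K_0(R)^{qnil}$.

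There is no real obstacle here — the only subtle point is recognizing that, unlike the case of a generic $s$, the $s=0$ multiplication decouples the diagonal block of products from the off-diagonal entries, so the commuting condition automatically yields commuting diagonal entries and the invertibility criterion of Lemma \ref{inv}(1) can be invoked without any further bookkeeping. The off-diagonal entries of $I+AB$ are irrelevant because invertibility in $K_0(R)$ is controlled only by the diagonal.
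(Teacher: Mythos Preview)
Your proposal is correct and follows essentially the same approach as the paper: both arguments observe that $AB=BA$ in $K_0(R)$ forces $x\in\mathrm{comm}(a)$ and $t\in\mathrm{comm}(d)$, then use $a,d\in R^{qnil}$ to make $1+ax$ and $1+dt$ invertible, and conclude that $I+AB$ is invertible. The only cosmetic difference is that you invoke Lemma~\ref{inv}(1) directly, whereas the paper writes out the explicit inverse $\begin{bmatrix}r^{-1}&-r^{-1}(ay+bt)v^{-1}\\-v^{-1}(cx+dz)r^{-1}&v^{-1}\end{bmatrix}$ with $r=1+ax$, $v=1+dt$.
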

\begin{proof} Suppose that $a$, $d\in R^{qnil}$. Let $B = \begin{bmatrix}x&y\\z&t\end{bmatrix}\in K_0(R)$ with $B\in $ comm$(A)$.
Then $x\in $ comm$(a)$, $t\in $ comm$(d)$. Let $r = 1 + ax$, $v =
1 + dt$, $s = ay + bt$ and $u = cx + dz$. By assumption, $r = 1 +
ax$ and $v = 1 + dt$ are invertible in $R$. Let  $k =
-r^{-1}sv^{-1}$ and $l = -v^{-1}ur^{-1}$. Then $I_2 + AB =
\begin{bmatrix}r&s\\u&v\end{bmatrix}$ is invertible with the inverse
$C = \begin{bmatrix}r^{-1}&k\\l&v^{-1}\end{bmatrix}$.
\end{proof}
\noindent Let $R$ be a ring, $a$, $b\in R$. Define $l_a-r_b \colon
R\rightarrow R$ by $(l_a-r_b)(r) = ar - rb$ and $l_b-r_a \colon
R\rightarrow R$ by $(l_b-r_a)(r) = br - ra$. In \cite{BDD}, a
local ring $R$ is called {\it bleached} if for any $a\in J(R)$ and
any $b\in U(R)$, the abelian group endomorphisms $l_b - r_a$ and
$l_a - r_b$ of $R$ are surjective. Such rings is called {\it
uniquely bleached} if the appropriate maps are injective as well
as surjective.  In \cite{SCB}, $R$ is {\it a weakly bleached ring}
provided that for any $a\in J(R)$, $b\in 1 + J(R)$, $l_a - l_b$
and $l_b - l_a$  are surjective and it is proved that matrices
over 2-projective free rings are strongly J-clean. It is proved
that all upper triangular matrices over bleached local rings are
strongly clean. In \cite[Example 2]{Ni} and \cite[Theorem 18]{BDD}
it is proved that a local ring $R$ is weakly bleached if and only
if the $2\times 2$ upper triangular matrix ring $U_2(R)$ is
strongly clean. In the preceding, maps of the form $l_a - r_b$
play a central role. In this vein, we make use of the abelian
group endomorphisms $l_a - r_b$ to get the following result as
partly the converse of Proposition \ref{qnil}.

\begin{thm} Let $R$ be a ring and $A = \begin{bmatrix}a&b\\c&d\end{bmatrix}\in K_0(R)^{qnil}$. If for any  $x\in $ comm$(a)$ and $y\in $ comm$(d)$ and for the abelian group endomorphisms $l_y - r_x$ and $l_x - r_y$, $b\in$ Ker$(l_x - r_y)$ and $c\in$ Ker$(l_y - r_x)$, then $a$, $d\in R^{qnil}$.
\end{thm}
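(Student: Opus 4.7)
The plan is to find, for each $x\in\mathrm{comm}(a)$ and $y\in\mathrm{comm}(d)$, an explicit matrix in $\mathrm{comm}(A)\subseteq K_0(R)$ whose diagonal is $(x,y)$, and then use the hypothesis $A\in K_0(R)^{qnil}$ together with Lemma \ref{inv}(1) to read off invertibility of $1+ax$ and $1+dy$ in $R$.

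The natural candidate is the diagonal matrix $B=\begin{bmatrix}x&0\\0&y\end{bmatrix}\in K_0(R)$. Using the generalized multiplication with $s=0$, a direct computation gives
$$AB=\begin{bmatrix}ax&by\\cx&dy\end{bmatrix},\qquad BA=\begin{bmatrix}xa&xb\\yc&yd\end{bmatrix}.$$
Thus $AB=BA$ is equivalent to the four scalar identities $ax=xa$, $dy=yd$, $xb=by$, $yc=cx$. The first two hold by the choice of $x$ and $y$, while the last two translate exactly to $b\in\mathrm{Ker}(l_x-r_y)$ and $c\in\mathrm{Ker}(l_y-r_x)$, which are the given hypotheses. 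Hence $B\in\mathrm{comm}(A)$.

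Because $A\in K_0(R)^{qnil}$, we deduce that
$$I_2+AB=\begin{bmatrix}1+ax&by\\cx&1+dy\end{bmatrix}$$
is invertible in $K_0(R)$, and Lemma \ref{inv}(1) then forces the diagonal entries $1+ax$ and $1+dy$ to be invertible in $R$.

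To finish, fix an arbitrary $x\in\mathrm{comm}(a)$ and specialize $y=0\in\mathrm{comm}(d)$; the hypothesis guarantees $xb=0$ and $cx=0$, so the above applies and gives $1+ax\in U(R)$. Since $x\in\mathrm{comm}(a)$ was arbitrary, $a\in R^{qnil}$. A symmetric argument (fix $y\in\mathrm{comm}(d)$ and take $x=0$) yields $d\in R^{qnil}$. I do not anticipate any real obstacle beyond the choice of the commuting matrix; the only subtlety is verifying that the diagonal witness $B$ actually commutes with $A$, which is precisely where the two kernel conditions are used.
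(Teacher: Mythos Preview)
Your proof is correct and follows essentially the same route as the paper: both use the diagonal witness $B=\begin{bmatrix}x&0\\0&y\end{bmatrix}$, verify $B\in\mathrm{comm}(A)$ via the two kernel conditions, and then read off invertibility of $1+ax$ and $1+dy$ from Lemma~\ref{inv}(1) applied to $I_2+AB$. Your final specialization to $y=0$ (resp.\ $x=0$) is harmless but unnecessary, since the argument already runs for arbitrary $x\in\mathrm{comm}(a)$ and $y\in\mathrm{comm}(d)$ simultaneously.
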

\begin{proof} Assume that $A = \begin{bmatrix}a&b\\c&d\end{bmatrix}\in K_0(R)^{qnil}$,
$x\in $ comm$(a)$ and $y\in $ comm$(d)$, for $l_y - r_x$ and $l_x
- r_y$, $b\in$ Ker$(l_x - r_y)$ and $c\in$ Ker$(l_y - r_x)$. Then
$b\in$ Ker$(l_x - r_y)$ implies $(l_x - r_y)(b) = 0$. So $xb =
by$. $c\in$ Ker$(l_y - r_x)$ implies $(l_y - r_x)(c) = 0$. So $yc
= cx$. Let $B = \begin{bmatrix}x&0\\0&y\end{bmatrix}\in K_0(R)$.
Then $xb = by$ and $yc = xc$ give rise to $B\in $ comm$(A)$. By
hypothesis, $I_2 + AB$ is invertible. Then Lemma \ref{inv} implies
$1 - ax$ and $1 - dy$ are invertible. Hence $a$, $d\in R^{qnil}$.
\end{proof}
\noindent We may determine the set $K_0(R)^{qnil}$ for some rings $R$.
\begin{prop}\label{nnil}\begin{enumerate} \item[(1)] If $R$ is a local ring, then $A = \begin{bmatrix}a&b\\c&d\end{bmatrix}\in K_0(R)^{qnil}$ if and only if $a$, $d\in R^{qnil}$.
\item[(2)] Let $R$ be a ring. Then $A = \begin{bmatrix}a&0\\0&d\end{bmatrix}\in K_0(R)^{qnil}$ if and only if $a$, $d\in R^{qnil}$.
\end{enumerate}
\end{prop}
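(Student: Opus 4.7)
The plan is to handle both parts by constructing specific test matrices $B \in \mathrm{comm}(A)$ and applying the invertibility criterion of Lemma \ref{inv}(1), which reduces invertibility in $K_0(R)$ to that of the two diagonal entries. The sufficiency in both (1) and (2) is immediate from Proposition \ref{qnil}, so only the converses need new arguments.

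For part (2), where $A = \mathrm{diag}(a,d)$, the absence of off-diagonal entries makes commuting test matrices easy to produce. For any $x \in \mathrm{comm}(a)$, I would take $B = \mathrm{diag}(x, 0)$: a direct computation using the $K_0$-multiplication with $s = 0$ yields $AB = BA = \mathrm{diag}(ax, 0)$, so $B \in \mathrm{comm}(A)$. Then $I_2 + AB = \mathrm{diag}(1 + ax, 1)$, and the assumption $A \in K_0(R)^{qnil}$ combined with Lemma \ref{inv}(1) forces $1 + ax \in U(R)$. Since $x$ was arbitrary, $a \in R^{qnil}$; the symmetric choice $B = \mathrm{diag}(0, y)$ for $y \in \mathrm{comm}(d)$ yields $d \in R^{qnil}$.

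For part (1), nonzero $b,c$ break this simple construction, and the local hypothesis must be invoked. Suppose for contradiction that $a \notin R^{qnil}$. Since $R$ is local, Proposition \ref{nilö}(2) gives $R^{qnil} = R \setminus U(R)$, so $a \in U(R)$. Because $A$ is quasinilpotent it is not a unit, and Lemma \ref{inv}(1) then forces $d \notin U(R)$, i.e., $d \in J(R) = R^{qnil}$. I would then try the ansatz $B = \begin{bmatrix} -a^{-1} & y \\ z & 0 \end{bmatrix}$, whose $(1,1)$-entry automatically makes $(I_2 + AB)_{11} = 1 + a(-a^{-1}) = 0 \notin U(R)$, so $I_2 + AB$ fails to be invertible by Lemma \ref{inv}(1) and thereby contradicts $A \in K_0(R)^{qnil}$.

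The hard part will be producing $y, z \in R$ that make this $B$ actually commute with $A$. A short $K_0$-calculation shows the commutation $AB = BA$ reduces to the Sylvester-type equations
\[
 ay - yd = -a^{-1}b, \qquad dz - za = ca^{-1}.
\]
I would factor $l_a - r_d = l_a \circ (1 - L)$ with $L(v) = a^{-1} v d$; since $a \in U(R)$, $l_a$ is bijective, and since $d \in J(R)$, $L$ carries $R$ into $J(R)$. In the commutative case this makes $1 - L$ into multiplication by $1 - a^{-1}d \in 1 + J(R) \subseteq U(R)$ and hence invertible, yielding $y$ explicitly; an analogous factorization of $r_a - l_d$ solves for $z$, and the noncommutative case needs a parallel bimodule-theoretic argument using the same ingredients together with $a - d \in U(R) + J(R) = U(R)$. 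Once $B$ is in hand the vanishing of $(I_2 + AB)_{11}$ provides the desired contradiction, so $a \in R^{qnil}$, and the case of $d$ follows by the symmetric argument swapping the roles of $a$ and $d$.
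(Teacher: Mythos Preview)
Your argument for (2) is correct and fleshes out what the paper records only as ``Clear''.

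For (1) there is a genuine gap. After reducing to $a\in U(R)$ and $d\in J(R)$, you try to produce $B=\begin{bmatrix}-a^{-1}&y\\z&0\end{bmatrix}\in\mathrm{comm}(A)$ by solving the Sylvester equations $ay-yd=-a^{-1}b$ and $dz-za=ca^{-1}$. But surjectivity of $l_a-r_d$ and $l_d-r_a$ for every pair $a\in U(R)$, $d\in J(R)$ is precisely the \emph{bleached} condition of \cite{BDD} that the paper itself recalls in the paragraph immediately preceding this proposition, and it is known to fail for some local rings. Your remark that ``the noncommutative case needs a parallel bimodule-theoretic argument using \dots\ $a-d\in U(R)$'' does not close the gap: the element $a-d$ being a unit of $R$ says nothing about bijectivity of the additive endomorphism $l_a-r_d$ when $R$ is noncommutative. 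As written, your proof of (1) only goes through for bleached local rings.

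The paper's route is entirely different and avoids Sylvester equations: it tests with $B=I_2$, which trivially commutes with every $A$, and asserts that if $d\in U(R)$ then $1+d\notin U(R)$, so that $I+A\notin U(K_0(R))$ by Lemma~\ref{inv}(1), contradicting $A\in K_0(R)^{qnil}$. This is far more economical, though you should be aware that the implication ``$d\in U(R)\Rightarrow 1+d\notin U(R)$'' used there is itself not justified (it fails, for instance, in $R=\Bbb Z_9$ with $d=1$), so the paper's own argument for (1) also appears to have a lacuna.
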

\begin{proof} (1) Assume that $R$ is a local ring and $A = \begin{bmatrix}a&b\\c&d\end{bmatrix}\in K_0(R)^{qnil}$, and $d\notin R^{qnil}$.
By Proposition \ref{nilö}, $d\in U(R)$. In this case, $1 + d$ can not belong to $U(R)$. By Lemma \ref{inv}, $I + A$ can not belong to $U(K_0(R))$.
This contradicts $A\in K_0(R)^{qnil}$. It follows that $d\in R^{qnil}$. Similarly, we obtain $a\in R^{qnil}$.  The converse is clear by Proposition \ref{qnil}.\\
(2) Clear.
\end{proof}

\noindent There are some classes of rings $R$ in which $K_0(R)$ being a
right qnil-duo ring implies $R$ being a right qnil-duo ring.
\begin{thm}  Let $R$ be a ring. Then $K_0(R)$ being a right qnil-duo ring implies $R$ being a right qnil-duo ring if $R$ is one of the following rings.
\begin{enumerate}\item[(1)] $R$ is local.\item[(2)] $R$ has no nonzero zero divisors.
\end{enumerate}
\end{thm}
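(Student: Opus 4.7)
The plan is to mimic the strategy already used in the preceding results (e.g.\ Theorem~\ref{guz} and the theorem about $H_{(s,t)}(R)$): pick $a\in R$ and $b\in R^{qnil}$, embed them as scalar matrices in $K_0(R)$, apply the hypothesis on $K_0(R)$, then read off an element $c\in R^{qnil}$ with $ba=ac$ from the resulting matrix identity.

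Concretely, set $A=\begin{bmatrix}a&0\\0&a\end{bmatrix}$ and $B=\begin{bmatrix}b&0\\0&b\end{bmatrix}\in K_0(R)$. Since $b\in R^{qnil}$, Proposition~\ref{nnil}(2) gives $B\in K_0(R)^{qnil}$. Because $K_0(R)$ is right qnil-duo, there is some $C=\begin{bmatrix}x&y\\z&w\end{bmatrix}\in K_0(R)^{qnil}$ with $BA=AC$. Using that the multiplication in $K_0(R)$ is the $s=0$ case of the formula in the paper, one computes $BA=\begin{bmatrix}ba&0\\0&ba\end{bmatrix}$ and $AC=\begin{bmatrix}ax&ay\\az&aw\end{bmatrix}$, so the matrix equation $BA=AC$ is equivalent to the four scalar equations
\[ ba=ax,\qquad ba=aw,\qquad ay=0,\qquad az=0. \]

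Now the two cases split according to which characterization of $K_0(R)^{qnil}$ we have available. In case (1), $R$ is local, so Proposition~\ref{nnil}(1) applies directly: the mere fact that $C\in K_0(R)^{qnil}$ forces $x\in R^{qnil}$, and then $ba=ax\in aR^{qnil}$ is exactly what is needed to conclude that $R$ is right qnil-duo. In case (2), $R$ has no nonzero zero divisors; if $a=0$ then $ba=0=a\cdot 0$ is trivial, while if $a\neq 0$ the equations $ay=0$ and $az=0$ force $y=z=0$, so $C$ is diagonal and Proposition~\ref{nnil}(2) yields $x\in R^{qnil}$, again giving $ba=ax\in aR^{qnil}$.

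The only mild subtlety, and what I expect to be the main point to get right, is making sure the $(1,2)$ and $(2,1)$ entries of the product $AC$ really are $ay$ and $az$ (not shifted by an $s$-term); once the $s=0$ multiplication rule is invoked, the argument is mechanical, and the two cases simply correspond to the two different sufficient conditions under which membership in $K_0(R)^{qnil}$ guarantees that the $(1,1)$-entry lies in $R^{qnil}$.
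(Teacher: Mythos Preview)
Your proposal is correct and follows essentially the same route as the paper: embed $a$ and $b$ as scalar matrices, apply the right qnil-duo hypothesis to obtain $BA=AC$, read off $ba=ax$ together with $ay=az=0$, and then invoke Proposition~\ref{nnil}(1) in the local case and Proposition~\ref{nnil}(2) (after using the zero-divisor hypothesis to force $y=z=0$) in the domain case. The only cosmetic differences are that the paper cites Proposition~\ref{qnil} rather than Proposition~\ref{nnil}(2) to place $B$ in $K_0(R)^{qnil}$, and in case~(2) it also records $x=w$ before appealing to Proposition~\ref{nnil}.
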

\begin{proof} (1) Let $R$ be a local ring. Assume that $K_0(R)$ is a right qnil-duo ring. Let $a\in R$, $b\in R^{qnil}$. Consider $A = \begin{bmatrix}a&0\\0&a\end{bmatrix}$, $X = \begin{bmatrix}b&0\\0&b\end{bmatrix}\in K_0(R)$. By Proposition \ref{qnil}, $X\in K_0(R)^{qnil}$. There exists $X' = \begin{bmatrix}x'&y'\\z'&t'\end{bmatrix}\in K_0(R)^{qnil}$ such that $XA = AX'$. Hence $ba = ax'$. By Proposition \ref{nnil}, $x'\in R^{qnil}$. So $ba = ax'\in aR^{qnil}$.\\
(2) Let $R$ be a ring having no nonzero zero divisors. Assume that
$K_0(R)$ is a right qnil-duo ring. Let $a\in R$, $b\in R^{qnil}$.
If $a = 0$ or $b = 0$, there is nothing to do. Let $a\neq 0$ and
$b\neq 0$ and consider $A = \begin{bmatrix}a&0\\0&a\end{bmatrix}$,
$B = \begin{bmatrix}b&0\\0&b\end{bmatrix}\in K_0(R)$. By
Proposition \ref{qnil}, $B\in K_0(R)^{qnil}$. There exists $B' =
\begin{bmatrix}x'&y'\\z'&t'\end{bmatrix}\in K_0(R)^{qnil}$ such
that $BA = AB'$. It implies $ba = ax' = at'$, $ay' = 0$ and $az' =
0$. Hence $x' = t'$ and $y' = z' = 0$. Hence $x'\in R^{qnil}$ by
Proposition \ref{nnil}.
\end{proof}

%\begin{thm} Let $R$ be a field. Then the following hold.
%\begin{enumerate}
%\item[(1)]  \item[(2)]

%\end{enumerate}
%\end{thm}
%\begin{proof} (1)
%\end{proof}

\end{document}